\crefname{equation}{}{}
\apptocmd{\sloppy}{\hbadness 10000\relax}{}{} % magic BibTeX spacing fix
\crefname{algocf}{Algorithm}{Algorithms}
\crefname{equation}{}{} %remove ``Equation''
\crefname{conjecture}{Conjecture}{Conjectures} %add ``Conjecture''
\colorlet{refkey}{orange!20}
\colorlet{labelkey}{blue!30}
\crefname{algocf}{Algorithm}{Algorithms}
\numberwithin{equation}{section}
\newtheorem{theorem}{Theorem}[section]
\newtheorem{lemma}[theorem]{Lemma}
\crefname{claim}{Claim}{Claims}
\newtheorem{corollary}[theorem]{Corollary}
\newtheorem*{question*}{Question}
\theoremstyle{definition}
\newtheorem*{definition*}{Definition}
\theoremstyle{remark}
\newtheorem{remark}[theorem]{Remark}
\newcommand{\snorm}[1]{\lVert#1\rVert}
\newcommand{\sang}[1]{\langle #1 \rangle}
\newcommand{\mb}{\mathbb}
\newcommand{\mbf}{\mathbf}
\newcommand{\mbm}{\mathbbm}
\newcommand{\mc}{\mathcal}
\newcommand{\ol}{\overline}
\newcommand{\on}{\operatorname}
\newcommand{\wt}{\widetilde}
\newcommand{\eps}{\varepsilon}
\title{Convergent points for random power series on the unit circle}
\author[A1]{Marcus Michelen}
\address{Department of Mathematics, Northwestern University}
\email{michelen.math@gmail.com, michelen@northwestern.edu}
\author[A2]{Mehtaab Sawhney}
\address{Department of Mathematics, Columbia University, New York, NY 10027}
\email{m.sawhney@columbia.edu}
\begin{document}

\begin{abstract} 
Consider a random power series of the form 
$P(z) = \sum_{n\ge 1} \eps_n a_n z^{n}$
where $a_n \in \mb{C}$ are deterministic and $\eps_n$ are chosen independently and uniformly at random from  $\{\pm 1\}$.  Kolmogorov's three-series theorem states that if $\sum_{n} |a_n|^2 = \infty$ then $P(z)$ almost-surely diverges at almost every $z$ with $|z| = 1$.  Dvoretzky and Erd\H{o}s proved in 1959 that if $|a_n| = \Omega(1/\sqrt{n})$ then in fact $P$ almost surely diverges at \emph{every} $|z| = 1$.  Erd\H{o}s then asked in 1961 if this is sharp, meaning that if $|a_n| = o(1/\sqrt{n})$ then there is almost surely some convergent point $z$ with $|z| = 1$.  We prove this in a strong sense and show that if $a_n = o(1/\sqrt{n})$ then in fact the set of convergent points of $P$ with $|z| = 1$ has Hausdorff dimension $1$.  
\end{abstract}

\maketitle

\section{Introduction}\label{sec:introduction}
Consider a random power series \[P(z) = \sum_{n \geq 1} \eps_n a_n z^n \]
where $a_n \in \mb{C}$ are deterministic and $\eps_n$ are chosen independently and uniformly from $\{\pm1\}$.  A special case of Kolmogorov's three-series theorem shows that for a fixed $z$ with $|z| = 1$, $P(z)$ converges almost-surely if and only if $\sum_{n \ge 1} |a_n|^2 < \infty$.  Fubini's theorem then allows one to show that if $\sum_{n \ge 1} |a_n|^2 < \infty$, then almost-surely $P$ converges at almost-every point with $|z| = 1$.  Seeking to better understand convergence of random  series, a classic result of 
Salem and Zygmund \cite[Theorem~5.1.5]{SZ54} asserts that if $\sum_{n \geq 1}|a_n|^2<\infty$ and $R_k = \sum_{n \geq k}|a_n|^2$ satisfies 
\[\sum_{k\ge 1}^{\infty}\frac{R_k^{1/2}}{k(\log k)^{1/2}}<\infty\]
then $P$ is almost-surely uniformly convergent on the unit circle.  Further, under mild conditions on the coefficient sequence, this result is if and only if.

The purpose of this paper is to consider the case when we expect that the power series will not converge uniformly on the unit circle. Dvoretzky and Erd{\H o}s \cite{DE59} proved that if there exists a positive sequence $c_n$ with 
\[\limsup_{n\to\infty}\frac{\sum_{j=0}^{n}c_j^2}{\log(1/c_n)} > 0\]
and that $|a_n|\ge c_n$ then almost surely the series $\sum_{n\ge 1}\eps_n a_nz^{n}$ diverges \emph{everywhere} on the unit circle $|z| = 1$. An important special case of this result is that when $|a_n|\ge 1/\sqrt{n}$, then almost surely the series $\sum_{n\ge 1}\eps_n a_nz^{n}$ diverge everywhere on the unit circle $|z| = 1$. This paper is concerned with the converse of the result of Dvoretzky and Erd{\H o}s \cite{DE59}. In particular, Erd\H{o}s \cite[pg~254]{Erd61} (see also \cite[Problem~\#527]{BloWeb}) asked if when $|a_n| = o(1/\sqrt{n})$ is it true that almost all series $\sum_{n\ge 1}\eps_n a_nz^{n}$ have a point on $|z| = 1$ which is convergent. (Erd\H{o}s in particular writes that ``Perhaps every series satisfying $\cdots$ has this property.'')

Our main result confirms the conjecture of Erd\H{o}s.
\begin{theorem}\label{thm:main} 
Let $a_n\in \mb{C}$ with $\lim_{n\to \infty}n^{1/2} |a_n| = 0$ and let $\eps_n$ be chosen independently and uniformly at random from $\{\pm 1\}$. Then almost surely there exists $z$ with $|z| = 1$ such that
$\sum_{n=1}^{\infty}\eps_na_nz^{n}$
converges.
\end{theorem}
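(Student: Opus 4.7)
If $\sum_n |a_n|^2 < \infty$ the result follows from Kolmogorov, so assume $\sum_n |a_n|^2 = \infty$. Group terms into dyadic blocks
\[
\Delta_k(\theta) = \sum_{2^{k-1} < n \le 2^k} \eps_n a_n e^{in\theta}, \qquad \sigma_k^2 = \sum_{2^{k-1} < n \le 2^k} |a_n|^2,
\]
and let $T_K(\theta) = \sum_{k\le K}\Delta_k(\theta)$. The hypothesis $n^{1/2}|a_n| \to 0$ forces $\sigma_k \to 0$: indeed every summand is $o(1/n)$ and $\sum_{n \in (2^{k-1}, 2^k]} 1/n = \log 2 + o(1)$. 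So we may choose $\tau_k \downarrow 0$ with $|a_n|^2 \le \tau_k^2/n$ for $n \in (2^{k-1}, 2^k]$. The goal becomes to find a (random) $\theta$ at which $(T_K(\theta))_{K\ge 1}$ is Cauchy and the within-block maxima tend to zero.

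\textbf{Random measure approach.}
The strategy is to build, for almost every $\eps$, a nonzero random measure $\mu_\infty$ on $\mb T$ supported on convergence points of $P$. I would use a multiplicative chaos-style construction: fix smooth nonnegative cutoffs $\phi_k$ (peaked near $0$, with width comparable to $\sigma_k$) and define
\[
\mu_K(d\theta) = Z_K^{-1}\prod_{k=1}^{K}\phi_k(\Delta_k(\theta))\,d\theta,
\]
with $Z_K$ a deterministic normalization making $\mb E \mu_K(\mb T)=2\pi$. Because the blocks $\Delta_k$ are independent across $k$, $\mu_K(A)$ is a nonnegative martingale in $K$ for every Borel $A$, and so $\mu_K$ has a weak a.s.\ limit $\mu_\infty$. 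Points $\theta$ in the support of $\mu_\infty$ satisfy $|\Delta_k(\theta)|$ small on every scale; tuning $\phi_k$ so that this forces $\sum_k |\Delta_k(\theta)| <\infty$ (together with a separate control on within-block maxima via a Borel--Cantelli/$L^2$ argument) gives convergence of $P$ at such $\theta$.

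\textbf{Second moment and multiscale bound.}
Nontriviality of $\mu_\infty$ reduces to a uniform second moment bound
\[
\mb E \mu_K(\mb T)^2 \le C < \infty.
\]
This expands into a double integral over $\mb T^2$ whose integrand factorizes across blocks by independence; the block-$k$ factor is $\mb E[\phi_k(\Delta_k(\theta))\phi_k(\Delta_k(\theta'))]/Z_k^2$. Since $\Delta_k(\theta)$ and $\Delta_k(\theta')$ are essentially independent once $|\theta-\theta'|\gg 2^{-k}$ and essentially equal when $|\theta-\theta'|\ll 2^{-k}$, the block factors are close to $1$ for $k> \log_2|\theta-\theta'|^{-1}$ and of order $1/\tau_k$ below this scale. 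Telescoping yields an integrand of order $|\theta-\theta'|^{-\alpha}$ for some $\alpha<1$, which is integrable on $\mb T^2$. Paley--Zygmund then gives $\Pr[\mu_\infty\ne 0]>0$, and a tail 0-1 law upgrades to probability one. The dimension-one statement in the abstract would follow by repeating the second-moment bound with energy integrals $\int\int |\theta-\theta'|^{-s}\,d\mu_\infty d\mu_\infty$ for every $s<1$.

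\textbf{Main obstacle.}
The central technical tension is between two requirements on the cutoff $\phi_k$: the tails must be light enough that points in $\mathrm{supp}(\mu_\infty)$ satisfy $\sum_k |\Delta_k|<\infty$ (forcing Cauchyness of $T_K$), yet the cutoff must be wide enough---essentially at scale $\sigma_k$---that the multiscale second moment produces an exponent $\alpha<1$ in the kernel. Threading this needle requires sharp anti-concentration and local central limit estimates for Rademacher trigonometric polynomials (Erd\H{o}s--Littlewood--Offord style), and the estimates must be fully uniform, since the hypothesis $\sqrt{n}|a_n|\to 0$ allows $\sigma_k$ to decay arbitrarily slowly. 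Handling within-block fluctuations, where individual $\Delta_k$ oscillates on a finer scale $2^{-k}$ than our tree structure resolves, is the secondary obstacle.
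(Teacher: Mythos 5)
Your high-level strategy---multi-scale blocks, a small-ball event at each scale, a second-moment/Paley--Zygmund argument to produce a nonempty set of convergent points, and a $0$--$1$ law upgrade---is genuinely the same overall plan as the paper (the paper works with a discrete net $\mc{A}_k$ and a branching process rather than a continuous multiplicative-chaos measure, but this is a cosmetic difference; the paper even builds essentially your measure $\mu_\infty$ in \cref{sec:Hausdorff} to prove \cref{thm:dim}). The gap is in the part you flag as the ``main obstacle,'' and it is not a matter of sharpening anti-concentration: the two requirements you put on $\phi_k$ are genuinely incompatible, and a different convergence criterion is needed.

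Concretely, to force $\sum_k|\Delta_k(\theta)|<\infty$ you need cutoffs at scale $c_k$ with $\sum_k c_k<\infty$, say $c_k\asymp k^{-2}$. Even with optimal small-ball estimates, the per-block factor in the second moment is $\mb{P}(|\Delta_k|\le c_k)^{-1}\asymp(\sigma_k/c_k)^2$, and telescoping to the decorrelation scale $K_0\approx\log_2|\theta-\theta'|^{-1}$ gives a kernel of size $\exp\bigl(\sum_{k\le K_0}2\log(\sigma_k/c_k)\bigr)$. The hypothesis $\sqrt{n}|a_n|\to0$ permits $\sigma_k$ to decay as slowly as $1/\log k$, in which case $\log(\sigma_k/c_k)\gtrsim\log k$, so the exponent is $\gtrsim K_0\log K_0$ and the kernel is $|\theta-\theta'|^{-\alpha}$ with $\alpha\to\infty$; the second moment diverges. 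Your fallback of handling within-block maxima ``via a Borel--Cantelli/$L^2$ argument'' also fails for the same reason: the uniform-over-$\theta$ within-block sup is of order $\sigma_k\sqrt{k}$, which for $\sigma_k\asymp1/\log k$ is not even bounded in $k$, so there is no almost-sure uniform bound to invoke and the sup control must be paid for probabilistically as part of the event.

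The paper's resolution is to weaken the convergence criterion so that it no longer requires $\sum_k|\Delta_k|<\infty$: instead (see \eqref{eq:good-definition} and \cref{lem:convergence-criterion}) a point is ``good'' at scale $k$ if the within-block partial sums stay below the \emph{slowly decaying} threshold $\delta_{k-1}^{1/2}$ \emph{and} the full block sum is below the \emph{summable} threshold $k^{-2}$. The first event (a Brownian small-ball by the reflection principle, after a Gaussian comparison) has cost $\exp(-\Omega(\delta_{k-1}(\log\log N_{k-1})^2))$, which is the dominant cost and is sub-exponential in $(\log\log N_{k-1})^2=\log M_k$ precisely because $\delta_{k-1}\to0$; the second has only polylogarithmic cost. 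To make this accounting close, the block ratios must also be chosen much larger than dyadic, $M_{k+1}=(\log N_k)^{\log\log N_k}$ in \eqref{eq:M-N-definition}, so the per-block ``entropy'' $\log M_{k+1}\asymp(\log\log N_k)^2$ matches the small-ball cost. Without this two-tier event and this scale choice, the second-moment argument you outline cannot be made to close under the sole assumption $\sqrt{n}|a_n|\to0$.
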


Our methods in fact deliver a substantially stronger conclusion regarding the set of points on the unit circle which converge. 
\begin{theorem}\label{thm:dim}
Let $a_n\in \mb{C}$ with $\lim_{n\to \infty}n^{1/2} |a_n| = 0$ and let $\eps_n$ be chosen independently and uniformly at random from $\{\pm 1\}$. Then almost surely the set of points with $|z| = 1$  such that $\sum_{n=1}^{\infty} \eps_na_nz^{n}$ converges has Hausdorff dimension $1$.
\end{theorem}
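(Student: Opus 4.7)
My plan is to prove \cref{thm:dim} via a random fractal construction, producing a random subset $\mc{C}\subseteq\{|z|=1\}$ contained in the convergence set of $P$, and then to deduce $\dim_{\mr H}\mc{C}=1$ from a Frostman/energy estimate combined with a $0/1$ law. The natural starting point is the dyadic block decomposition $B_k(z)=\sum_{2^k\le n<2^{k+1}}\eps_n a_n z^n$, yielding independent random trigonometric polynomials with pointwise variance $\sigma_k^2=\sum_{2^k\le n<2^{k+1}}|a_n|^2$, and the assumption $\sqrt{n}|a_n|\to 0$ gives $\sigma_k\to 0$. Since $B_k$ has degree $\lesssim 2^{k+1}$, Bernstein's inequality shows that $B_k$ is essentially constant on any arc of length $\lesssim 2^{-k}$, so the dyadic arc scale naturally matches the oscillation scale of each block, making a tree-based construction natural.

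The construction would run on the binary tree of dyadic arcs $\mc{I}_k$ of length $2\pi\cdot 2^{-k}$. Choosing thresholds $t_k\to 0$, declare a level-$(k+1)$ arc $I$ to be \emph{good} if its parent is good and $\sup_{z\in I}|B_k(z)|\le t_k$; let $\mc{G}_k$ denote the resulting good arcs and set $\mc{C}=\bigcap_k\bigcup_{I\in\mc{G}_k}I$. If $\sum_k t_k<\infty$ then $\sum_k|B_k(z)|<\infty$ on $\mc{C}$, giving convergence along dyadic times; convergence along all partial sums follows by a standard inner-block oscillation bound using $|a_n|\to 0$. Crucially, events at distinct levels involve disjoint blocks of signs, so the percolation is independent across scales, enabling a second moment argument. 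A random measure $\mu$ on $\mc{C}$ would be obtained as the weak limit of normalized Lebesgue measures on $\bigcup_{I\in\mc{G}_k}I$, and $\dim_{\mr H}\mc{C}\ge s$ would follow from finiteness of the expected $s$-energy
\[\mb{E}\!\int\!\!\int |z-w|^{-s}\,d\mu(z)\,d\mu(w),\]
computed by conditioning on the deepest common ancestor arc of $z$ and $w$ and exploiting the scale-wise independence.

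The heart of the difficulty is the choice of $t_k$: obtaining $\mc{C}$ inside the convergence set via absolute bounds requires $\sum_k t_k<\infty$, whereas pushing the per-scale density of good arcs up to $1$ fast enough to conclude $\dim_{\mr H}=1$ instead requires $t_k/\sigma_k\to\infty$ quickly (so that $\sum_k\Pr(|B_k|>t_k)<\infty$). These two conditions are incompatible when $\sigma_k$ decays arbitrarily slowly, which the hypothesis allows. I therefore expect the proof must exploit the \emph{cancellation} in $\sum_k B_k(z)$ rather than rely on absolute bounds: replace the uniform smallness condition $|B_k|\le t_k$ by a selection that, at each scale, prefers the sub-arc on which the dyadic partial sum $S_{2^{k+1}}(z)$ retreats toward a chosen target, using that $B_k$ oscillates through both signs across the two children of a level-$k$ arc. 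The associated analysis should require a quantitative Salem--Zygmund-type statement about how often a random trigonometric polynomial dips close to any prescribed value on small arcs, and pushing the resulting second-moment/energy estimate all the way to dimension exactly $1$ -- rather than to some smaller positive dimension -- will be the principal technical challenge.
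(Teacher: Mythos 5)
Your outline captures the right macroscopic architecture — a multiscale branching construction, a Frostman/energy lower bound for the dimension, and a second-moment estimate powered by the across-scale independence of sign blocks — and you correctly isolate the central obstruction: if you insist on $\sup_{z\in I}|B_k(z)|\le t_k$ with $\sum_k t_k<\infty$, then when $\sigma_k$ decays arbitrarily slowly you cannot also have $\Pr(|B_k|>t_k)$ summable, so an absolute-bound percolation cannot work. Where the proposal stops short is at the actual resolution of this tension, and the fix you gesture at (steering toward an arc where the partial sum ``retreats to a target'') is not adequate as stated.

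The paper's resolution is a \emph{two-part} goodness criterion rather than a single threshold, and the two parts have very different strengths. At scale $k$, a point is good if (i) all intra-block partial sums are bounded by $3\delta_{k-1}^{1/2}$, which is \emph{not} summable in $k$ and so is a cheap event (probability $\ge\exp(-\Omega(\delta_{k-1}(\log\log N_{k-1})^2))$ by a Brownian-motion confinement estimate), and (ii) the \emph{full-block} sum is $\le 3k^{-2}$, which \emph{is} summable but is a single anticoncentration event for a complex Gaussian of variance $\lesssim\delta_{k-1}^2(\log\log N_{k-1})^2$, and hence costs only a polylogarithmic factor, not an exponential one. Condition (ii) gives Cauchy behavior along block endpoints; condition (i) controls the wandering in between; together they imply convergence (\cref{lem:convergence-criterion}). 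Your proposal implicitly collapses these into one supremum bound, which is exactly where the impossibility you notice arises. A ``steer toward a target'' rule only controls the endpoint and says nothing about the intra-block oscillation of the partial sums, so it does not by itself give convergence, and it also destroys the convexity/symmetry that lets one use the Gaussian correlation inequality to decouple the two conditions.

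A second genuine gap is the scale choice. With dyadic blocks each parent arc has only two children, and you must lose factors both for the derivative/net argument (needed so that goodness at one angle propagates to a small arc around it) and for the goodness probability; two children cannot absorb those losses. The paper instead takes $N_{k+1}/N_k = M_{k+1}\approx(\log N_k)^{\log\log N_k}$, so that each node at scale $k$ has $\asymp M_{k+1}(\log N_k)^{-8}$ candidate children, enough to survive the $\exp(-\Omega(\delta_k(\log\log N_k)^2))=M_{k+1}^{-o(1)}$ thinning and still grow. This choice also balances against the large-sieve decorrelation bound, which together with a Lindeberg swap to Gaussians and a coupling argument is what makes the second moment close; none of these ingredients appear in the proposal. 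So: right framework, correct diagnosis of the obstruction, but the specific two-part criterion, the non-dyadic scales, and the decorrelation toolkit (large sieve, Gaussian comparison, Gaussian correlation inequality) are all missing and are precisely what the argument hinges on.
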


In other words, if one considers a sequence $a_n$ with $\sum_{n \geq 1} |a_n|^2 = \infty$ but $\lim_{n \to \infty} n^{-1/2} |a_n| = 0$, then the set of convergent points of $P$ on the unit circle has Lebesgue measure $0$ but Hausdorff dimension $1$.

\subsection{Outline of proof of \texorpdfstring{\cref{thm:main}}{Theorem 1.1}}
The result of Dvoretzky and Erd{\H o}s \cite{DE59}---which built on earlier work of Dvoretzky \cite{Dvo56} concerning covering the unit circle with random arcs---shows that if the coefficient sequence is $a_n = n^{-1/2}$, then the series $\sum_{n = 1}^{\infty} a_n \eps_n z^{n}$ diverges \emph{everywhere} on the unit circle. We now sketch a short proof of this special case, which is closely related to the argument in \cite{DE59} and serves as motivation for much of our analysis. 

Suppose that for a fixed $z$, the series $\sum_{n = 1}^{\infty} a_n \eps_n z^{n}$ converges. Then for every $\eps > 0$, there exists $N_{\eps}$ such that for $N \geq N_{\eps}$ we have that the event 
\begin{equation}\label{eq:huer}
\mc{C}_N(z) = \left\{\inf_{N\le j\le N^2}\Big|\sum_{n=N}^{j}\eps_n a_n z^{n}\Big|\le \eps\right\}
\end{equation}
holds.  Note that the random variable $Z = \sum_{n=N}^{N^2} \eps_n a_n z^{n}$ has variance $\mb{E}[|Z|^2] = \sum_{n=N}^{N^2} |a_n|^2 \approx \log N$. Thus the event in \eqref{eq:huer} can be compared to a Brownian motion staying inside the interval $[-\eps,\eps]$ for time proportional to $\log N$. This happens with probability $\lesssim e^{-\Omega(\eps^{-2}\log N)} = N^{-\Omega(\eps^{-2})}$. Moreover, for $z$ and $z'$ with, e.g,  $|z - z'| \leq N^{-5}$ , the quantity in \eqref{eq:huer} changes by $o(1)$ and so there are essentially only $N^{O(1)}$ distinct points $z$ relevant to \eqref{eq:huer}.  A union bound then shows divergence at all points on the unit circle.

Recall that \cref{thm:main} states that if the coefficients decay even slightly faster than $n^{-1/2}$, then the series converges.  Already, revisiting the event $\mc{C}_N(z)$ tells us why one could hope for this behavior: if the coefficients $a_n$ now satisfy $|a_n| \leq \delta / \sqrt{n}$, then the variance of the sum is now of order $\asymp \delta^2 \log N$, implying that the probability a given $z$ satisfies $\mc{C}_N(z)$ is more like $N^{-\Theta(\delta^2 / \eps^2)}$.  A back-of-the-envelope calculation to estimate the correlations of the sum in \eqref{eq:huer} suggests that, $\sum_{n = N}^{N^2} \eps_n a_n z^n$ and $\sum_{n = N}^{N^2} \eps_n a_n w^n$ typically decorrelate once we have $|z - w| \gg N^{-2}$. Provided $\delta \ll \eps$, one  expects that in fact \emph{many} points satisfy \eqref{eq:huer}.   

Of course, for a point $z$ to be a convergent point for $\sum_n  \eps_n a_n z^n$, it is not enough for \eqref{eq:huer} to hold for some large $N$.  We must also have, for instance, the same holds when we look between coefficients in $[N^2 ,N^4]$, and $[N^4,N^8]$ and so on.  We saw $\mb{P}(\mc{C}_N(z)) \approx N^{-\Theta(\delta^2/\eps^2)}$ and hope that $\mc{C}_N(z), \mc{C}_N(w)$ are nearly independent for $|z - w| \gg N^{-2}.$  For any given point $z$ that satisfying $\mc{C}_N(z)$, it is in fact unlikely for it to then satisfy, say, $\mc{C}_{N^2}(z)$.  However, we now suddenly have more points to work with, as the events $\mc{C}_{N^2}(z)$ and $\mc{C}_{N^2}(w)$ are now approximately independent for $|z - w| \gg N^{-4}$, meaning we have roughly $N^{4}$ many chances for some $\mc{C}_{N^2}$ to hold while we only had around $N^2$ chances for $\mc{C}_N$ to hold.  Further, if $|\zeta - z| \ll N^{-2}$, then if $\mc{C}_N(z)$ holds it is quite likely that $\mc{C}_N(\zeta)$ holds.  This leads to a branching process sort of argument: look at points that satisfy $\mc{C}_N(z)$; think of their ``children'' as points at distance $\ll N^{-2}$; show that if we have many points satisfying $\mc{C}_N(z)$ then we have many points satisfying $\mc{C}_{N^2}(w)$. 

This is a close approximation of our argument, with two tweaks.  The first is to choose our scales to grow a bit slower than $N,N^2,N^4,\ldots.$  We choose a sequence of increasing scales $N_1 < N_2 < \cdots$ such that $(\log N_i)^{\omega(1)}\le N_{i+1}/N_i\le N_i^{o(1)}$ (see \cref{eq:M-N-definition}); there is significant flexibility in this choice.  We also need a slightly more sophisticated event than in \eqref{eq:huer}, as satisfying \eqref{eq:huer} is not enough to guarantee convergence.  Defining $\delta_i = \max_{n \geq N_i} \sqrt{n} |a_n|$ we say that a point $z$ is $i$-good if 
\begin{align}\label{eq:goodness}
\sup_{N_{i-1} \le j \le N_{i}} \Big|\sum_{n = N_{i-1}}^{j} \eps_n a_n z^{n}\Big| \le \delta_{i-1}^{1/2},\, \qquad \text{ and } \qquad 
\Big|\sum_{n = N_{i-1}}^{N_{i}-1} \eps_n a_n z^{n}\Big| \le i^{-2}.
\end{align}
If a point $z$ is $i$-good for all $i$, then one can show that the partial sums of $\sum_{n\ge 1} \eps_n a_n z^{n}$ form a Cauchy sequence (see \cref{lem:convergence-criterion}). 

The key observation at this stage is that, on their own, the first event in \cref{eq:goodness} holds with probability $(N_{i}/N_{i-1})^{-\Omega(\delta_{i-1})}$, while the second event holds with probability $\gtrsim (\log N_{i})^{-\Omega(1)}$ (using that $i \lesssim \log N_{i}$). Provided $\delta_i$ does not decay too slowly, the first event dominates the convergence criterion. This dominance is precisely what makes the Dvoretzky–Erd{\H o}s \cite{DE59} upper bound sharp.
The second event is nonetheless crucial to establish convergence: since $\delta_i$ decays arbitrarily slowly, the series $\sum \delta_{i}^{\Omega(1)}$ need not converge. 

To make this heuristic picture rigorous, we first apply a Lindeberg argument to replace the Rademacher coefficients by Gaussian ones (\cref{lem:lindberg-pairs}). With Gaussian coefficients, the Gaussian correlation inequality allows us to decouple the two events in \cref{eq:goodness}, and also to decouple the real and imaginary parts in the first event of \cref{eq:goodness}.  We are then left with events concerning Gaussian processes that are easily handled by classical tools. This is carried out in \cref{subsec:one-pt}.

To upgrade this one-point expectation argument into a full convergence proof, we employ a branching process formalism together with a second moment method. In particular, if a point $z$ is $i$-good, then standard derivative bounds imply that any $z'$ with $|z-z'|\le N_i^{-1}(\log N_i)^{-\Omega(1)}$ is also $i$-good. To formalize this, we let $\mc{A}_i$ be the set of points in $\{j/N_i : j \in [N_i]\}$ that are good for all scales less than $i$. We then refine each point $j/N_i$ into approximately $(N_{i+1}/N_i)(\log N_i)^{-\Omega(1)}$ distinct points at the next scale, and test whether they are $(i+1)$--good. The key step is to show that if $|\mc{A}_i|\ge N_i^{1-\Omega(1)}$, then 
\[
|\mc{A}_{i+1}|\;\;\ge\;\;(N_{i+1}/N_i)^{1-\Omega(1)} \cdot |\mc{A}_i|\;\;\ge\;\; N_{i+1}^{1-\Omega(1)}.
\]

This is established via a second moment analysis. A subtle point is that even distant points can be strongly correlated. For example, if $a_n = 0$ whenever $n \equiv 1 \pmod{2}$, then the partial sums at $z$ and $-z$ coincide. However, the large sieve inequality (\cref{thm:large-siev}) ensures that such strongly correlated pairs are rare. Since we must control not only the full segment sums but also all partial sums, we consider correlations across various partial intervals; nevertheless, by a basic reduction, for each interval $[N_{i-1},N_i]$ it suffices to check only $(\log N_i)^{O(1)}$ partial sums (see \cref{subsec:conv}). This part of the argument is developed in \cref{subsec:corr-point,subsec:comp}.

The final portion of the paper concerns upgrading \cref{thm:main} to \cref{thm:dim}. The main technical difference from \cref{thm:main} is that we must ensure not only that $|\mc{A}_i|$ is large, but also that many points in $\mc{A}_i$ have many descendants.  We will use Frostman's lemma (\cref{lem:Frostman}) to lower bound the Hausdorff dimension of the set of convergent points and so we need to construct a sufficiently rich ``Frostman measure'' on the set of convergent points.  The branching structure employed to ensure convergence allows us to construct such a measure in a similar manner to how one constructs the Cantor measure: one iteratively pushes the measure downward from $i$-good points to $i+1$-good points.  The analysis becomes slightly more technical---for instance when applying the second moment method we must localize to the descendants of a specific node in the branching process---but our toolkit remains the same. This analysis is carried out in \cref{sec:Hausdorff}.

\begin{remark}
    In the case of $a_n = 1/\sqrt{n}$, Dvoretzky and Erd\H{o}s proved that almost  surely $P$ diverges everywhere on the unit circle.  Our analysis shows that this case is extremely delicate.  In fact, our proof shows that if one sets $L$ to be a large constant, then the probability there is a point $z$ with $|z| = 1$ so that $\max_{\ell} \left|\sum_{n = 1}^\ell \eps_n a_n z^n \right| \leq L$ is $1 - o_{L \to \infty}(1)$.  To see this, replace the $\delta_{k-1}^{1/2}$ in the definition of $k$-good at \eqref{eq:good-definition} with $L$ and note the bound in \cref{lem:one-point} becomes $\exp(-\Omega( (L^{-1}\log\log N_{k-1})^2))$.  In the proof of \cref{thm:main}, this lower bound is only used in \eqref{eq:good-pt-LB-use}, which for $L$ large enough then shows the inductive step to establish a proliferation of points with this weaker definition of good.
\end{remark}

\subsection{Notation and organization of the paper}
We use standard asymptotic notation throughout. In particular, $A\lesssim B$ means there exists an absolute constant $C>0$ such that $A\lesssim C B$. Similarly $A\asymp B$ means $A\lesssim B$ and $B\lesssim A$.  For $\theta \in \mb{R}/\mb{Z}$, we write $e(\theta) = e^{2\pi i \theta}$.  

In \cref{subsec:conv} we formalize our convergence criterion for our partial sums and note that it suffices to consider a sparse sequence of scales. In \cref{subsec:one-pt}, we prove that the probability a particular point is good is appropriate. In \cref{subsec:corr-point}, we use the large sieve to bound the number of correlated pairs and show under non-correlation the events that $z$ and $z'$ are good are uncorrelated. The proof of \cref{thm:main} is then completed in \cref{subsec:comp}. Finally we carry out the proof of \cref{thm:dim} in \cref{sec:Hausdorff}.

\subsection{Acknowledgments}
The authors thank Dmitrii Zakharov for useful clarifications regarding Hausdorff dimension and Oren Yakir for useful discussions. 

MM is supported in part by NSF CAREER grant DMS-2336788 as well as DMS-224662.
This research was conducted during the period MS served as a Clay Research Fellow. A portion of this research was conducted while MS was visiting Oxford University.

\section{Proof of \texorpdfstring{\cref{thm:main}}{Theorem 1.1}}

We fix a set of coefficients $a_n$ with $\lim_{n\to \infty}n^{-1/2} |a_n| = 0$ and $C\ge 1$ will be a sufficiently large absolute constant.  We first define the relevant scales. Define sequences $N_{i}$ and $M_i$ inductively. Set $N_1 = M_1$ to be the smallest constant such that $N_1\ge C$ and $|a_n| \cdot n^{-1/2}\le 1/C$ for all $n\ge N_1$.  Inductively define  
\begin{equation} \label{eq:M-N-definition}
M_{j+1} = \lfloor \log N_j\rfloor^{\lfloor \log\log N_j \rfloor}\qquad \text{ and } \qquad N_{j+1} = N_j \cdot M_{j+1}.
\end{equation}
By taking a crude bound we have that \begin{equation*} 
    \log N_j \asymp j(\log j)^2\,.
\end{equation*}
We next define 
\begin{equation*}
\delta_k = \max\big\{(\log k)^{-1/2}, \max_{n\ge N_k}n^{-1/2} \cdot |a_n|\big\}.
\end{equation*}
Note that by assumption we have that $\lim_{k\to \infty}\delta_k= 0$. 

\subsection{Sparsifying the set of scales: a criterion for convergence} \label{subsec:conv}

The goal of this subsection is to work on a sparser set of scales in order to come up with a criterion for convergence.   Our main goal will be to prove \cref{lem:convergence-criterion}, which states that under two almost-sure events, we can identify convergent points by checking certain events at our sparse scale. 

We break the interval $[N_i,N_{i+1}]$ into a series of smaller scales which will be used for subsequent parts of the proof. Let $r_{i,1} = N_i$ and define $r_{i,\ell + 1} = r_{i,\ell} (1 + \eta_{i,\ell})$ with $\eta_{i,\ell}\in [(\log N_i)^{-10}, 2\cdot (\log N_i)^{-10}]$ and with a final index $\ell_i$ such that $r_{i,\ell_i} = N_{i+1}$. Note that $\ell_i \asymp (\log N_i)^{10} \cdot (\log\log N_i)^2$ by construction. 
We first prove that it suffices to consider partial sums at indices of the form $r_{i,j}$.  To this end, define \begin{equation*}
    \mc{E}_{1,k} =  \left\{\sup_{j \in [\ell_k]} \sup_{\substack{\ell\in [N_k,N_{k+1}]\\ r_{k,j}\le \ell<r_{k,j+1}}}\sup_{|z| = 1}\Big|\sum_{n = r_{k,j}}^{\ell} \eps_n a_n z^{n}\Big|\ge (\log N_k)^{-2}\right\}\,.
\end{equation*}

\begin{lemma}\label{lem:sparsify-to-r}
    Set $\mc{E}_1 = \{\mc{E}_{1,k} \text{ occurs for infinitely many }k\}$.  Then $\mb{P}(\mc{E}_1) = 0$.
\end{lemma}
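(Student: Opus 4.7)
The plan is a direct Borel--Cantelli argument: I would show that $\sum_{k \ge 1}\mb{P}(\mc{E}_{1,k}) < \infty$, so that $\mc{E}_{1,k}$ holds for only finitely many $k$ almost surely. Fix $k$ and $j \in [\ell_k]$. The guiding observation is that each window $[r_{k,j},r_{k,j+1})$ is extraordinarily short on the multiplicative scale: by construction $\eta_{k,j} \le 2(\log N_k)^{-10}$, and combined with $|a_n|\le \delta_k/\sqrt n$ for $n \ge N_k$, one obtains for every $\ell \in [r_{k,j},r_{k,j+1})$ the variance bound
\[
\sum_{n=r_{k,j}}^{\ell}|a_n|^2 \;\le\; \delta_k^{2}\,\log(r_{k,j+1}/r_{k,j}) \;\le\; 2\delta_k^{2}(\log N_k)^{-10}.
\]

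For each fixed $z$ with $|z|=1$ and each fixed $\ell$, I would apply Hoeffding's inequality to the real and imaginary parts of $S_\ell(z) := \sum_{n=r_{k,j}}^{\ell}\eps_n a_n z^n$ separately to obtain
\[
\mb{P}\!\paren{|S_\ell(z)|\ \ge\ \tfrac{1}{2}(\log N_k)^{-2}} \;\le\; 4\exp\!\paren{-\Omega\!\paren{(\log N_k)^{6}/\delta_k^{2}}}.
\]
To upgrade this one-point estimate to a uniform-in-$z$ estimate, I would discretize $z$ with a $\tau$-net on the unit circle of size $O(\delta_k N_{k+1}^{3/2}(\log N_k)^{2})$ with $\tau := (\log N_k)^{-2}/(100\,\delta_k N_{k+1}^{3/2})$; the crude derivative bound $|\partial_z S_\ell(z)| \le \sum_{n < r_{k,j+1}} n|a_n| \le \delta_k N_{k+1}^{3/2}$ (valid since only indices $n\ge N_k$ appear) ensures that the discretization error in $z$ is at most $(\log N_k)^{-2}/100$, which is absorbed into the slack between $\tfrac{1}{2}(\log N_k)^{-2}$ and $(\log N_k)^{-2}$.

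Taking a union bound over the $\ell_k \lesssim (\log N_k)^{12}$ values of $j$, the at most $N_{k+1}$ integer values of $\ell$, and the net in $z$ would then yield
\[
\mb{P}(\mc{E}_{1,k}) \;\le\; N_{k+1}^{O(1)} \exp\!\paren{-\Omega\!\paren{(\log N_k)^{6}/\delta_k^{2}}}.
\]
Since $\log N_{k+1} = \log N_k + O((\log\log N_k)^{2}) \lesssim \log N_k$, while $\delta_k \ge (\log k)^{-1/2}$ forces $1/\delta_k^{2} \lesssim \log N_k$, the right-hand side is bounded by $\exp(-\Omega((\log N_k)^{5}))$ for all sufficiently large $k$, which is summable. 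Borel--Cantelli then gives $\mb{P}(\mc{E}_1)=0$. I do not anticipate a substantive obstacle at this step: the window is so thin on the multiplicative scale that the variance is tiny, and the resulting tail bound easily absorbs both the $z$-net and the union over partial sums. No correlation estimate, Gaussianization, or large-sieve input is needed here, as all of that heavier machinery is reserved for the multi-point analysis on the coarse scale $\{N_i\}_i$.
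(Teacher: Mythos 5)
Your proof is correct and follows the same strategy as the paper's: Borel--Cantelli reduction, a union bound over the windows $j$ and partial-sum endpoints $\ell$, discretizing $z$ via a net sized to a crude derivative estimate, and then a Hoeffding (equivalently Azuma--Hoeffding) tail bound driven by the fact that each window $[r_{k,j},r_{k,j+1})$ has variance $\lesssim (\log N_k)^{-10}$ against a target threshold of $(\log N_k)^{-2}$. The only cosmetic differences are that you sharpen the variance estimate with the extra factor $\delta_k^2$ (the paper simply uses $|a_n|^2 \le 1/n$, which already suffices) and choose a finer, more tailored net than the paper's off-the-shelf $N_k^{-5}$ net of size $\lesssim N_k^5$; neither of these changes the substance of the argument, and your observation that no Gaussianization, correlation bound, or large-sieve input is needed at this stage is accurate.
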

\begin{proof}
    We will apply the Borel--Cantelli lemma to the collection of events $\mc{E}_{1,k}$. Noting that $\ell_k N_{k+1}^2 \leq  N_k^2 (\log N_k)^2,$ it suffices to prove that for a fixed $\ell\in [N_k,N_{k+1}]$ with $r_{k,j}\le \ell <r_{k,j+1}$ we have that
    \[\mb{P}\Big[\sup_{|z| = 1}\Big|\sum_{n = r_{k,j}}^{\ell}\eps_na_nz^{n}\Big|\ge (\log N_k)^{-2}\Big]\le N_k^{-4}.\]
    We then note that the derivative of $\sum_{n = r_{k,j}}^{\ell}\eps_na_nz^{n}$ is $\sum_{n=r_{k,j}}^{\ell}\eps_na_n\cdot n z^{n-1}$. Note by triangle inequality that $|\sum_{n=r_{k,j}}^{\ell}\eps_na_n \cdot nz^{n-1}|\le \sum_{n=1}^{\ell}n\le \ell^2\le N_k^4$.  If we set $\mc{N}$ to be a $N_k^{-5}$ net of size $\lesssim N_k^{5}$ it is sufficient to prove 
    \[\max_{z \in \mc{N}} \mb{P}\Big[\Big|\sum_{n = r_{k,j}}^{\ell}\eps_na_nz^{n}\Big|\ge (\log N_k)^{-2}/2\Big]\le N_k^{-10}.\]
    Note that $\sum_{n=r_{k,j}}^{\ell} |a_n|^2\le \sum_{n=r_{k,j}}^{\ell} 1/n\lesssim (\log N_k)^{-10}$.  The desired result is then an immediate consequence of the Azuma--Hoeffding inequality. 
\end{proof}

\cref{lem:sparsify-to-r} will allow us to only consider partial sums indexed by the terms $\{r_{k,j}\}$.  With this in mind, for the interval from $[r_{k,j}, r_{k,j+1} - 1]$ define \begin{equation*}
    Q_{k,j}(z) =  \sum_{n=r_{k,j}}^{r_{k,j+1} - 1}\eps_n a_nz^{n}.
\end{equation*}
On the $k$th scale we will consider points in the following net:
\begin{equation*}
    \mc{S}_k = \left\{\frac{j}{N_k} : j \in [N_k] \right\} \subset \mb{R}/\mb{Z}\,.
\end{equation*}

Now that we have restricted to our sparser scales given by $r_{i,j}$, we are ready to define our central events. 
At step $k$, we define the set of ``$k$-good'' points $\mc{G}_k$ points via 
\begin{equation}\label{eq:good-definition}
    \mc{G}_k = \left\{\theta \in \mb{R}/\mb{Z} : \sup_{j\in [\ell_{k-1}]}\Big|\sum_{r = 1}^{j} Q_{k-1,r}(e(\theta))\Big|\le 3\cdot \delta_{k-1}^{1/2}\text{ and }\Big|\sum_{r = 1}^{\ell_{k-1}} Q_{k-1,r}(e(\theta))\Big|\le 3 \cdot k^{-2} \right\}\,.
\end{equation}
We note that the event of a point lying in $\mc{G}_k$ depends only on the random variables $\eps_n$ for $n \in [N_{k-1},N_k - 1]$

The set of ``alive'' points at step $k$ is defined as \begin{equation}\label{eq:A-k-def}
    \mc{A}_k = \left\{\theta \in \mc{S}_k : \theta \in \mc{G}_{k} \wedge \exists~\varphi \in \mc{A}_{k-1} \text{ with }|\varphi - \theta| \leq N_{k-1}^{-1} (\log N_{k-1})^{-8} \right\}\,.
\end{equation} 
Finally, the set of points that remain alive is \begin{equation}
    \mc{A} = \bigcap_{k \geq 1} \left(\mc{A}_k + [-N_k^{-1}(\log N_k)^{-5}, N_k^{-1}(\log N_k)^{-5}] \right) \subseteq \mb{R}/\mb{Z}\,.
\end{equation}

Note that $\mc{A}$ is no longer a discrete set of points.  The idea will be that under basic control of the derivative, bounds for an angle $\theta \in \mc{A}_k$ can be translated to the whole interval $\theta+ [-N_k^{-1}(\log N_k)^{-5}, N_k^{-1}(\log N_k)^{-5}]$.  With this in mind, we define \begin{equation}
\mc{E}_{2,k} = \left\{\sup_{|z| = 1} \sup_{N_k\le \ell<N_{k+1}-1}\Big|\sum_{j=N_{k}}^{\ell}ja_j \cdot \eps_jz^{j}\Big|\ge N_{k+1} \cdot \log N_{k+1}\right\}\,.
\end{equation}

\begin{lemma}\label{lem:deriv-controlled}
    Set $\mc{E}_2 = \{\mc{E}_{2,k} \text{ occurs for infinitely many }k\}$.  Then $\mb{P}(\mc{E}_2) = 0.$
\end{lemma}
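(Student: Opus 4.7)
The plan is to apply the Borel--Cantelli lemma by showing $\sum_k \mb{P}(\mc{E}_{2,k}) < \infty$. The argument closely parallels the proof of \cref{lem:sparsify-to-r}: fix the scale $k$, handle one choice of $\ell$ at a time by discretizing to a net, apply a concentration inequality at each net point, and then union bound over the net and over $\ell$.

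First I would fix $\ell \in [N_k, N_{k+1}-1]$ and consider the random polynomial $f_\ell(z) = \sum_{j=N_k}^{\ell} j a_j \eps_j z^{j}$ on $|z|=1$. Using $|a_j| \le j^{-1/2}$ (valid for $j \ge N_1$), one has the crude derivative bound $|f_\ell'(z)| \le \sum_{j \le N_{k+1}} j^2 |a_j| \le \sum_{j \le N_{k+1}} j^{3/2} \lesssim N_{k+1}^{5/2}$. Choosing a net $\mc{N}_k \subset \{|z|=1\}$ of cardinality $\lesssim N_{k+1}^{5}$ with spacing $\lesssim N_{k+1}^{-5/2}$, the deterministic Lipschitz bound shows that it suffices to prove
\[
\max_{z \in \mc{N}_k}\;\mb{P}\bigl[\,|f_\ell(z)| \ge \tfrac{1}{2} N_{k+1} \log N_{k+1}\,\bigr] \le N_{k+1}^{-10}
\]
(with the constant in the exponent adjustable by refining the net).

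Next I would apply Azuma--Hoeffding to each fixed $z \in \mc{N}_k$. The sub-Gaussian parameter is controlled by
\[
\sum_{j=N_k}^{\ell} j^{2}|a_j|^{2} \;\lesssim\; \sum_{j \le N_{k+1}} j \;\lesssim\; N_{k+1}^{2}.
\]
Hence $\mb{P}[|f_\ell(z)| \ge \tfrac{1}{2}N_{k+1}\log N_{k+1}] \le 2\exp\bigl(-\Omega((\log N_{k+1})^{2})\bigr)$, which easily beats $N_{k+1}^{-10}$. Union bounding over $\mc{N}_k$ and over the $\le N_{k+1}$ choices of $\ell$ yields
\[
\mb{P}(\mc{E}_{2,k}) \;\le\; N_{k+1}^{O(1)} \cdot \exp\bigl(-\Omega((\log N_{k+1})^{2})\bigr),
\]
which is summable in $k$ (super-polynomially small in $N_{k+1}$, and $N_{k+1}$ grows at least polynomially in $k$). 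Borel--Cantelli then gives $\mb{P}(\mc{E}_2)=0$, completing the proof.

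There is really no subtle step here: the assumption $|a_n| \le n^{-1/2}/C$ means $|j a_j| \lesssim \sqrt{j}$, so the total variance of the derivative-weighted tail across $[N_k, N_{k+1}]$ is only $\asymp N_{k+1}^{2}$, and the threshold $N_{k+1}\log N_{k+1}$ provides a logarithmic factor of slack that the Azuma tail converts into a $\exp(-\Omega((\log N_{k+1})^{2}))$ bound, comfortably absorbing all polynomial losses from the net argument. The only thing to watch is choosing a net fine enough that the polynomial overhead from $\|f_\ell'\|_\infty \lesssim N_{k+1}^{5/2}$ is dominated by the Gaussian tail, which is immediate.
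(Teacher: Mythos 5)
Your proof is correct and follows essentially the same route as the paper: Borel--Cantelli, a net on the unit circle controlled by a deterministic derivative bound, and Azuma--Hoeffding at each net point, with the tail $\exp(-\Omega((\log N_{k+1})^2))$ absorbing all polynomial overheads. The only cosmetic differences are that you union bound over $\ell$ whereas the paper applies the maximal form of Azuma--Hoeffding to handle $\sup_\ell$ directly, and you use $|a_j| \le j^{-1/2}$ to get a slightly sharper (but not necessary) derivative bound.
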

\begin{proof}
    A simple bound on  the second derivative is $\sum_{j\le N_{k+1}}j^2\lesssim N_{k+1}^3$.  Setting up another use of Borel--Cantelli, it is sufficient  to take $z$ in a $N_{k+1}^{-4}$ separated net $\mc{N}$ of size $\lesssim N_{k+1}^4$ and prove that 
    \begin{equation}\label{eq:deriv-too-big}
    \max_{z \in \mc{N}} \mb{P}\left(\sup_{N_k\le \ell<N_{k+1}-1}\Big|\sum_{j=N_{k}}^{\ell}ja_j \cdot \eps_jz^{j}\Big|\ge N_{k+1} \cdot (\log N_{k+1})/2\right) \leq N_{k+1}^{-6}.\end{equation}
     Note that $(\sum_{j\le N_{k+1}}|ja_j|^2)^{1/2}\le (\sum_{j\le N_{k+1}}j)^{1/2}\le N_{k+1}$ and so Azuma--Hoeffding proves \eqref{eq:deriv-too-big}.  Applying the Borel--Cantelli lemma completes the proof.
\end{proof}

We are now ready to prove our convergence criterion.

\begin{lemma}\label{lem:convergence-criterion}
    On the event $\mc{E}_1^c \cap \mc{E}_2^c$, the power series $P(e(\theta))$  converges for every $\theta \in \mc{A}$.
\end{lemma}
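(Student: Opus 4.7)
The plan is to verify the Cauchy criterion for the partial sums $S_N(\theta) := \sum_{n=1}^{N}\eps_n a_n e(\theta)^n$ at every $\theta \in \mc{A}$. Fix such a $\theta$, and for each $k \ge 1$ choose some $\theta_k \in \mc{A}_k$ with $|\theta - \theta_k| \le N_k^{-1}(\log N_k)^{-5}$; on $\mc{E}_1^c \cap \mc{E}_2^c$ there is some finite $K_0$ past which both $\mc{E}_{1,k}^c$ and $\mc{E}_{2,k}^c$ hold. I will telescope the partial sums over the scales $N_k$ in two pieces: bound the block increments $T_k := S_{N_{k+1}}(\theta) - S_{N_k}(\theta)$ so that $\sum_k T_k$ converges absolutely, and separately control the within-block fluctuation $S_N(\theta) - S_{N_k}(\theta)$ uniformly for $N \in [N_k, N_{k+1})$ by a quantity tending to zero.

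The core technical ingredient is a derivative-based transfer estimate from $\theta_{k+1}$ to $\theta$. For any $M \in [N_k, N_{k+1}]$, the function $\vartheta \mapsto \sum_{n=N_k}^{M-1} \eps_n a_n e(\vartheta)^n$ has complex derivative $2\pi i \sum_{n=N_k}^{M-1} n \eps_n a_n e(\vartheta)^n$, which on $\mc{E}_{2,k}^c$ is of modulus at most $O(N_{k+1}\log N_{k+1})$; combined with $|\theta - \theta_{k+1}| \le N_{k+1}^{-1}(\log N_{k+1})^{-5}$ and the mean value theorem, this gives
\[
\Big|\sum_{n=N_k}^{M-1} \eps_n a_n\bigl(e(\theta)^n - e(\theta_{k+1})^n\bigr)\Big| \;=\; O\bigl((\log N_{k+1})^{-4}\bigr).
\]

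Applying this with $M = N_{k+1}$ and using the second event in the $(k+1)$-goodness of $\theta_{k+1}$, which furnishes $|\sum_{n=N_k}^{N_{k+1}-1} \eps_n a_n e(\theta_{k+1})^n| \le 3(k+1)^{-2}$, yields $|T_k| \le 3(k+1)^{-2} + O((\log N_{k+1})^{-4})$. Since $\log N_k \asymp k (\log k)^2$, this is summable in $k$, so $(S_{N_k}(\theta))_k$ is Cauchy. For the within-block piece, given $N \in [N_k, N_{k+1})$ choose the unique $j$ with $r_{k,j} \le N < r_{k,j+1}$ and decompose
\[
S_N(\theta) - S_{N_k}(\theta) \;=\; \sum_{r=1}^{j-1} Q_{k,r}(e(\theta)) \;+\; \sum_{n=r_{k,j}}^{N} \eps_n a_n e(\theta)^n.
\]
The first event of $(k+1)$-goodness at $\theta_{k+1}$ controls $|\sum_{r=1}^{j-1} Q_{k,r}(e(\theta_{k+1}))| \le 3\delta_k^{1/2}$, and the transfer estimate with $M = r_{k,j}$ preserves this bound at $\theta$ up to $O((\log N_{k+1})^{-4})$; the second sum is at most $(\log N_k)^{-2}$ by $\mc{E}_{1,k}^c$. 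Since $\delta_k \to 0$, both contributions tend to zero as $k \to \infty$, completing the Cauchy argument.

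I expect no serious obstacle in this lemma: the calibration is essentially forced on us. The one point to be mindful of is that the buffer $(\log N_k)^{-5}$ baked into the definition of $\mc{A}$ must be strong enough to beat the $O(N_{k+1}\log N_{k+1})$ derivative bound coming from $\mc{E}_2^c$ across an entire block of length $\asymp N_{k+1}$; the exponent $5$ was chosen so that the transfer error $(\log N_{k+1})^{-4}$ is still easily summable (using $\log N_k \asymp k(\log k)^2$) and in particular much smaller than both $(k+1)^{-2}$ and $\delta_k^{1/2}$.
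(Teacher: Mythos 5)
Your argument is correct and coincides essentially with the paper's proof: both telescope partial sums across the scales $N_k$, invoke $\mc{E}_{1,k}^c$ to snap to the $Q_{k,j}$ boundaries, use $\mc{E}_{2,k}^c$ to transfer from an anchor $\theta_{k+1}\in\mc{A}_{k+1}$ to $\theta$ via the derivative bound, and read off the two conditions in the definition of $(k{+}1)$-goodness, with the full-block contribution summable via $k^{-2}$ and the within-block remainder bounded by $O(\delta_k^{1/2})+O((\log N_{k+1})^{-4})+(\log N_k)^{-2}\to 0$. The only cosmetic difference is that you organize the estimate as full block increments $T_k$ plus a within-block remainder, whereas the paper bounds $\sum_{j=\ell_1}^{\ell_2}$ directly by isolating the two boundary sub-blocks.
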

\begin{proof}
    We will prove that the sequence of partial sums is Cauchy.  Let $K_0$ be so that value so that for all $k \geq K_0$ the event   $\mc{E}_{1,k} \cup \mc{E}_{2,k}$ does not hold.  Let $K \geq K_0$ be large enough.  Consider $N_K \leq \ell_1 < \ell_2$ and let $k_1, k_2$ be so that $N_{k_r} \leq \ell_r < N_{k_r + 1}$ for $r \in \{1,2\}$.    Write \begin{align}\label{eq:show-cauchy}
        \sum_{j = \ell_1}^{\ell_2} \eps_j a_j  z^j = \sum_{j = N_{k_1}}^{N_{k_2} - 1} \eps_j a_j z^j - \sum_{j = N_{k_1}}^{\ell_1-1} \eps_j a_j z^j  + \sum_{j = N_{k_2}}^{\ell_2} \eps_j a_j z^j\,.
    \end{align}

    The second two terms will be handled by the same argument and so we handle the first term.  First note that we may find $t$ so that $r_{k_1,t} \leq \ell_1 - 1 < r_{k_1,t+1}$.  Since $\mc{E}_{1,k_1}^c$ holds we have that \begin{equation}\label{eq:replace-rk}
    \left|\sum_{j = N_{k_1}}^{\ell_1-1} \eps_j a_j z^j\right| \leq \left|\sum_{j = 1}^{t-1} Q_{k_1,j} (z) \right| + (\log N_{k_1})^{-2}\,.
    \end{equation}
    Since $\theta \in \mc{A}$ we have there is some $\varphi \in \mc{A}_{k_1 + 1}$ so that if we set $w = e(\varphi)$ then we have  $|\theta - \varphi| \leq N_{k_1 + 1}^{-1} (\log N_{k_1+1})^{-5}$.  Writing $z = e(\theta)$, since the event $\mc{E}_{2,k_1}^c$ holds we can bound   
    \begin{equation}\label{eq:approx-with-deriv}
    \left|\sum_{j = 1}^{t-1} Q_{k_1,j} (z)\right| \leq \left|\sum_{j = 1}^{t-1} Q_{k_1,j} (w)\right| + (\log N_{k_1 + 1})^{-4}\,.
    \end{equation}
    Since $\varphi \in \mc{A}_{k_1 + 1}$ and $w = e(\varphi)$, we have
    \begin{equation}\label{eq:approx-with-Q}
        \left|\sum_{j = 1}^{t-1} Q_{k_1,j} (w)\right| \leq  3 \delta_{k_1}^{1/2}
    \end{equation}
    Combining \cref{eq:replace-rk}, \cref{eq:approx-with-deriv,eq:approx-with-Q} and applying the same bounds for the third term in \eqref{eq:show-cauchy} shows
    \begin{equation}\label{eq:sum-intermediate}
        \left|\sum_{j = N_{k_1}}^{\ell_1-1} \eps_j a_j z^j\right| + \left|\sum_{j = N_{k_2}}^{\ell_2} \eps_j a_j z^j\right| \leq 10\delta_{k_1}^{1/2}\,.
    \end{equation}
    To handle the first term in \eqref{eq:show-cauchy}, first write \begin{equation*}
        \sum_{j = N_{k_1}}^{N_{k_2} - 1} \eps_j a_j z^j = \sum_{k = k_1}^{k_2 - 1} \sum_{j = 1}^{\ell_k} Q_{k,j}(z)\,. 
    \end{equation*} 
    We note that \eqref{eq:approx-with-deriv} shows that for the same $\varphi \in \mc{A}_{k+1}$ and $w = e(\varphi)$ we have \begin{equation*}
        \left|\sum_{j = 1}^{\ell_k} Q_{k,j}(z)\right| = \left|\sum_{j = 1}^{\ell_k} Q_{k,j}(w) \right| + O(\log N_k^{-2}) \leq \frac{4}{k^2}   
    \end{equation*}
    where in the last inequality we used that $\varphi \in \mc{G}_{k+1}$ and that $\log N_k \geq k$. Combining the previous two displayed equations shows \begin{equation}\label{eq:sum-endpoints}
        \left| \sum_{j = N_{k_1}}^{N_{k_2} - 1} \eps_j a_j z^j \right| \leq \sum_{k \geq k_1} \frac{4}{k^2} \leq \delta_{K}
    \end{equation}
    Combining \eqref{eq:show-cauchy} with \eqref{eq:sum-intermediate} and \eqref{eq:sum-endpoints} and recalling that $\delta_K \to 0$ as $K \to \infty$ completes the proof.
\end{proof}

\subsection{A one point-estimate: the probability a point is good}\label{subsec:one-pt}

The main purpose of this subsection is to lower bound the probability that a point lies in the ``good'' set $\mc{G}_k$ defined at \eqref{eq:good-definition}.  For technical reasons, we will need a bit more wiggle room.
\begin{lemma}\label{lem:one-point}
    For $z$ with $|z| = 1$ we have $$\mb{P}\left[\sup_{j\in [\ell_{k-1}]}\Big|\sum_{r = 1}^{j} Q_{k-1,r}(z)\Big|\le \delta_{k-1}^{1/2} \wedge \Big|\sum_{r = 1}^{\ell_{k-1}} Q_{k-1,r}(z)\Big|\le  k^{-2}\right] \geq \exp\left( -\Omega\left(\delta_{k-1} \cdot (\log\log N_{k-1})^2 \right)\right)\,.$$
\end{lemma}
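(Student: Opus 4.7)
I would first invoke the Lindeberg exchange principle (\cref{lem:lindberg-pairs}) to replace the Rademacher sequence $\eps_n$ by i.i.d.\ real standard Gaussians $g_n$, at the cost of an error in the probability that is negligible compared to the target lower bound. After this replacement the underlying randomness is the Gaussian vector $(g_n)_{N_{k-1}\le n<N_k}$, and both events appearing in the statement are of the form $\{X\in K\}$ with $K\subset\mb{R}^{N_k-N_{k-1}}$ symmetric and convex (each is an intersection of Euclidean-disk constraints in $\mb{C}$ pulled back to the underlying Gaussian space). The Gaussian correlation inequality (Royen's theorem) then yields $\mb{P}(A\cap B)\ge\mb{P}(A)\,\mb{P}(B)$, where $A$ denotes the supremum event and $B$ denotes the endpoint event, reducing the task to bounding each separately.

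The variance bookkeeping is straightforward: using \cref{eq:M-N-definition} and $|a_n|^2\le \delta_{k-1}^2/n$ for $n\ge N_{k-1}$, we obtain
\[\sigma^2:=\sum_{n=N_{k-1}}^{N_k-1}|a_n|^2\lesssim \delta_{k-1}^2\log(N_k/N_{k-1})\asymp \delta_{k-1}^2(\log\log N_{k-1})^2,\]
while each block variance $\mathrm{Var}(Q_{k-1,r}(z))\lesssim \delta_{k-1}^2(\log N_{k-1})^{-10}$ is much smaller than the target small-ball radii squared. For the endpoint event $B$, $\sigma^2$ is only polylogarithmic in $N_{k-1}$ whereas $k^{-2}\ge(\log N_{k-1})^{-O(1)}$, so a direct Gaussian anti-concentration estimate for the complex Gaussian $\sum_n g_na_nz^n$ (after decoupling real and imaginary parts via one further application of GCI) gives $\mb{P}(B)\ge\exp(-O(\log\log N_{k-1}))$. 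Since $\delta_{k-1}\ge(\log\log N_{k-1})^{-1/2}$, this comfortably exceeds the target lower bound.

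The main work is to lower bound $\mb{P}(A)$. Writing $S_j=\sum_{r=1}^j Q_{k-1,r}(z)$, I would apply GCI once more to the symmetric convex sets $\{\sup_j|\Re S_j|\le r\}$ and $\{\sup_j|\Im S_j|\le r\}$ with $r=\delta_{k-1}^{1/2}/\sqrt 2$, since their intersection implies $A$. Each factor is a one-dimensional Gaussian small-ball probability for a real martingale of total variance at most $\sigma^2$ whose increments have variance $\ll r^2$. The classical Brownian small-ball estimate
\[\mb{P}\Bigl(\sup_{t\le\sigma^2}|B_t|\le r\Bigr)\asymp\exp\Bigl(-\frac{\pi^2\sigma^2}{8r^2}\Bigr),\]
combined with a routine Brownian-interpolation argument justified by the smallness of the block increments, transfers this bound to the discrete walks $\Re S_j$ and $\Im S_j$. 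Multiplying the two factors gives
\[\mb{P}(A)\ge \exp\bigl(-O(\sigma^2/r^2)\bigr)=\exp\bigl(-O(\delta_{k-1}(\log\log N_{k-1})^2)\bigr),\]
which matches the target.

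The main obstacle I anticipate is calibrating the Lindeberg exchange: the error in swapping Rademachers for Gaussians must be dominated by the very small probability $\exp(-\Omega(\delta_{k-1}(\log\log N_{k-1})^2))$ that we are trying to produce. This requires smoothing the indicators of $A$ and $B$ at scales compatible with $\delta_{k-1}^{1/2}$ and $k^{-2}$ respectively, and then applying a third-moment Lindeberg estimate that exploits the bound $|\eps_n a_n z^n|\lesssim\delta_{k-1}/\sqrt n$. I expect \cref{lem:lindberg-pairs} to be designed precisely for this task, so the calibration should reduce to a careful application of that lemma rather than to any new ingredient.
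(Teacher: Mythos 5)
Your proposal follows essentially the same route as the paper: Gaussian exchange via \cref{lem:lindberg-pairs}, Royen's inequality (\cref{thm:GCI}) to factor the sup-event from the endpoint event and to split real and imaginary parts, a direct Gaussian small-ball estimate for the endpoint, and a Brownian small-ball bound for the running maximum. Two minor remarks. First, on the Gaussian exchange: the paper packages \cref{lem:lindberg-pairs} into a $W_1$-coupling (\cref{cor:gaussian-approx}) rather than smoothing the indicators directly; this is cleaner here because the sup-event is an intersection of $\ell_{k-1}$ constraints, and the coupling lets one absorb the error deterministically using the extra factor-of-$3$ slack in \eqref{eq:good-definition}, whereas direct smoothing of the joint indicator would require tracking third derivatives of a product of $\ell_{k-1}$ bump functions (as the paper does later in \cref{lem:lip} for the second-moment step). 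Either version works since the coupling error $N_{k-1}^{-1/12}$ is far below the target $\exp(-\Omega(\delta_{k-1}(\log\log N_{k-1})^2))$. Second, your appeal to a ``Brownian-interpolation argument justified by the smallness of the block increments'' is not needed for a \emph{lower} bound: a real centered Gaussian process with nondecreasing variances is (in distribution) a Brownian motion sampled at deterministic times $t_1<\dots<t_{\ell_{k-1}}\le\sigma^2$, so $\mb{P}(\sup_j|B_{t_j}|\le r)\ge\mb{P}(\sup_{t\le\sigma^2}|B_t|\le r)$ holds trivially, which is exactly what the paper invokes in \cref{lem:brownian-LB}. Controlling increments would matter only for the matching upper bound, which is not required here.
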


We will prove these by Gaussian comparison and so we begin with a basic Lindeberg--type bound.  We prove a sufficiently general version that will be useful for our second moment calculation in \cref{subsec:corr-point}.

\begin{lemma}\label{lem:lindberg-pairs}
There exists $L\ge 1$ such that the following holds. 
Let $a_i\in \mb{C}, b_i \in \mb{C}$, $\eps_i \sim \{\pm 1\}$ and $g_i\sim \mc{N}(0,1)$. Then for any function $f:\mb{C}^2\to \mb{C}$, we have that  
\[\bigg|\mb{E}\Big[f\Big(\sum_{i=1}^{\ell}a_i\eps_i,\sum_{i=1}^{\ell}b_i\eps_i\Big)\Big] - \mb{E}\Big[f\Big(\sum_{i=1}^{\ell}a_ig_i,\sum_{i=1}^{\ell}b_ig_i\Big)\Big]\bigg|\le L\cdot \max_{|\alpha| = 3} \snorm{\partial^\alpha f}_{\infty}\cdot \left(\sum_{j=1}^{\ell}(|a_j|^3 +|b_j|^3)  \right).\]
\end{lemma}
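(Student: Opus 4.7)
\medskip

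The plan is the standard Lindeberg replacement argument, swapping one random variable at a time and using a third-order Taylor expansion. View $\mb{C}^2 \cong \mb{R}^4$ and regard $f$ as a function of four real coordinates, so that $\partial^\alpha f$ for $|\alpha|=3$ are the usual partial derivatives in these coordinates. For each $j \in [\ell]$ define the vector $v_j \in \mb{R}^4$ corresponding to $(a_j,b_j) \in \mb{C}^2$ (so $\snorm{v_j}_\infty \le \max(|a_j|,|b_j|)$), and set
\[
H_j \;=\; \sum_{i<j}(a_i g_i, b_i g_i) + \sum_{i>j}(a_i \eps_i, b_i \eps_i) \in \mb{R}^4.
\]
Then $\mb{E}[f(\sum a_i\eps_i,\sum b_i\eps_i)] - \mb{E}[f(\sum a_i g_i,\sum b_i g_i)]$ telescopes as
\[
\sum_{j=1}^{\ell} \Bigl(\mb{E}[f(H_j + v_j \eps_j)] - \mb{E}[f(H_j + v_j g_j)]\Bigr),
\]
where in each summand we may condition on $H_j$ (independent of $\eps_j$ and $g_j$).

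For each fixed $H_j$, Taylor expand $t \mapsto f(H_j + t v_j)$ about $t=0$ to third order in the single real variable $t$:
\[
f(H_j + t v_j) \;=\; f(H_j) + t\, D_{v_j} f(H_j) + \tfrac{t^2}{2}\, D_{v_j}^2 f(H_j) + R_j(t),
\]
with remainder bounded by $|R_j(t)| \le \tfrac{|t|^3}{6}\snorm{D_{v_j}^3 f}_\infty$. Expanding the directional derivative as a multinomial in the four coordinates of $v_j$ gives $\snorm{D_{v_j}^3 f}_\infty \le C \,(|a_j|+|b_j|)^3\, \max_{|\alpha|=3}\snorm{\partial^\alpha f}_\infty$ for an absolute constant $C$, and $(|a_j|+|b_j|)^3 \le 8(|a_j|^3+|b_j|^3)$.

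Now take expectations over $\eps_j$ (resp.\ $g_j$): the $t^0$ term matches, the $t$ term vanishes since $\mb{E}\eps_j = \mb{E} g_j = 0$, and the $t^2$ term matches since $\mb{E}\eps_j^2 = \mb{E} g_j^2 = 1$. Thus only the remainders survive, and
\[
\Bigl|\mb{E}[f(H_j+v_j\eps_j)] - \mb{E}[f(H_j+v_j g_j)]\Bigr| \;\le\; \tfrac{1}{6}\bigl(\mb{E}|\eps_j|^3 + \mb{E}|g_j|^3\bigr)\, \snorm{D_{v_j}^3 f}_\infty \;\le\; L\,(|a_j|^3+|b_j|^3)\max_{|\alpha|=3}\snorm{\partial^\alpha f}_\infty,
\]
for some absolute $L$. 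Summing over $j$ yields the claim.

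There is no real obstacle; the only care needed is bookkeeping between the complex presentation of the statement and the real-variable Taylor expansion, and converting the directional third derivative into the $\ell^\infty$ norm of partial derivatives (which costs only an absolute multinomial constant absorbed into $L$). If $f$ is complex-valued, apply the argument to its real and imaginary parts separately.
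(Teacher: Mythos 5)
Your proof is correct and follows essentially the same route as the paper's: the standard Lindeberg telescoping replacement of one variable at a time, a third-order Taylor expansion in real coordinates, matching of the zeroth, first, and second moments of $\eps_j$ and $g_j$, and a remainder bound in terms of $\max_{|\alpha|=3}\snorm{\partial^\alpha f}_\infty$ and $|a_j|^3 + |b_j|^3$. The only cosmetic difference is that you Taylor expand the univariate map $t \mapsto f(H_j + t v_j)$ while the paper writes out the multivariate second-order Taylor polynomial directly; these are the same computation.
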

\begin{proof}
For $j \in \{0,\ldots,\ell\}$ define the random variable \[X_j = f\Big(\sum_{i=1}^{\ell - j }a_i\eps_i + \sum_{i  = \ell - j + 1}^{\ell} a_i g_i,\sum_{i=1}^{\ell - j }b_i\eps_i + \sum_{i  = \ell - j + 1}^{\ell} b_i g_i\Big)\]
then note that we are interested in $|\mb{E} X_0 - \mb{E} X_\ell|$.  By telescoping we have \begin{equation} \label{eq:lindeberg-telescope}
    |\mb{E} X_0 - \mb{E} X_\ell| \leq \sum_{j = 1}^\ell |\mb{E} X_{j - 1} - \mb{E} X_j| \leq \sum_{j = 1}^\ell \sup_{z,w} | \mb{E}[f(z + a_j \eps_j,w + b_j \eps_j) -\mb{E}[f(z + a_j g_j,w + b_j g_j) ]|\,.
\end{equation}
By Taylor's theorem, for any $\theta$ we have \begin{align}
     f(z + a_j \theta,w + b_j \theta) &= f(z,w) + \theta\left( a_j \partial^{1,0}f(z,w) + b_j\partial^{0,1}f(z,w)\right) \nonumber\\
     &\qquad +  \frac{\theta^2}{2}\left(a_j^2 \partial^{2,0} f(z,w) + 2a_jb_j \partial^{1,1} f(z,w) + b_j^2 \partial^{0,2} f(z,w)  \right) \nonumber \\
     &\qquad + O(|\theta|^3 (|a_j|^3 + |b_j|^3) \max_{|\alpha| = 3} \snorm{\partial^\alpha f}_\infty ) \label{eq:taylors-theorem}
\end{align}
Since $\mb{E} \eps_j = \mb{E} g_j$ and $\mb{E} \eps_j^2 = \mb{E} g_j^2$, \eqref{eq:taylors-theorem} implies \[ \left|\mb{E}[f(z + a_j \eps_j,w + b_j \eps_j) -\mb{E}[f(z + a_j g_j,w + b_j g_j)\right| \lesssim (|a_j|^3 + |b_j|^3) \max_{|\alpha| = 3} \snorm{\partial^\alpha f}_\infty\,.  \]
Combining with \eqref{eq:lindeberg-telescope} completes the proof.
\end{proof}

We will apply \cref{lem:lindberg-pairs} to approximately couple $Q_{k,j}$ with its gaussian counterpart defined via \begin{equation}\label{eq:wtq-def}
\wt{Q}_{k,j}(z) =  \sum_{n=r_{k,j}}^{r_{k,j+1} - 1}g_n a_nz^{n}
\end{equation}
where $g_n\sim \mc{N}(0,1)$. 

\begin{corollary}\label{cor:gaussian-approx}
For $k$ large enough, we may couple $Q_{k,j}(z)$ and $\wt{Q}_{k,j}(z)$ such that $|Q_{k,j}(z) - \wt{Q}_{k,j}(z)|\lesssim N_k^{-1/12}$ with probability $1-N_k^{-1/12}$.
\end{corollary}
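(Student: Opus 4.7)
The plan is to combine the Lindeberg-type estimate in \cref{lem:lindberg-pairs} with Strassen's theorem in order to convert a smooth-test-function comparison into a quantitative coupling. First, I would apply \cref{lem:lindberg-pairs} with the $b_i$'s set to zero, and with the first-coordinate coefficients taken to be $a_n z^n$ for $n \in [r_{k,j}, r_{k,j+1}-1]$, so that for any smooth $f \colon \mb{C} \to \mb{R}$ one obtains
\begin{equation*}
\bigl|\mb{E} f(Q_{k,j}(z)) - \mb{E} f(\wt{Q}_{k,j}(z))\bigr| \;\lesssim\; \max_{|\alpha|=3}\snorm{\partial^\alpha f}_\infty \cdot \sum_{n=r_{k,j}}^{r_{k,j+1}-1} |a_n|^3.
\end{equation*}
Since $|z|=1$ and since our choice of $N_1$ forces $|a_n|\le C^{-1}n^{-1/2}$ for all $n\ge N_1$, the third-moment sum is bounded by $\sum_{n\ge N_k}n^{-3/2}\lesssim N_k^{-1/2}$.

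Next, for each Borel set $A\subseteq \mb{C}$ and each parameter $\eta>0$, I would build a smooth mollified indicator $f_{A,\eta}\colon\mb{C}\to[0,1]$ satisfying $\mathbf{1}_A\le f_{A,\eta}\le \mathbf{1}_{A^\eta}$ (the $\eta$-neighborhood) with $\max_{|\alpha|=3}\snorm{\partial^\alpha f_{A,\eta}}_\infty\lesssim \eta^{-3}$ uniformly in $A$; concretely, convolve $\mathbf{1}_{A^{\eta/2}}$ against a fixed compactly supported smooth bump on $\mb{C}\cong \mb{R}^2$ rescaled to radius $\eta/2$. Setting $\eta = N_k^{-1/12}$, so that $\eta^{-3} = N_k^{1/4}$, and substituting into the Lindeberg bound gives
\begin{equation*}
\mb{P}\bigl(\wt{Q}_{k,j}(z) \in A\bigr) \;\le\; \mb{P}\bigl(Q_{k,j}(z) \in A^\eta\bigr) \;+\; O\bigl(N_k^{-1/4}\bigr)
\end{equation*}
uniformly over Borel $A$.

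Finally, I would invoke Strassen's theorem (in the form: if $\mb{P}(Y\in A)\le \mb{P}(X\in A^\eta)+\varepsilon$ holds for every Borel $A$, then $X,Y$ can be coupled so that $\mb{P}(|X-Y|>\eta)\le \varepsilon$). Taking $X=Q_{k,j}(z)$, $Y=\wt{Q}_{k,j}(z)$, $\eta=N_k^{-1/12}$, and noting that for $k$ large enough $O(N_k^{-1/4})\le N_k^{-1/12}$, this delivers a coupling under which $|Q_{k,j}(z)-\wt{Q}_{k,j}(z)|\le N_k^{-1/12}$ outside an event of probability at most $N_k^{-1/12}$, which is the claimed conclusion.

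The only delicate point is establishing the mollifier bound $\snorm{\partial^\alpha f_{A,\eta}}_\infty \lesssim \eta^{-3}$ uniformly in the set $A$ (handled by the standard scaling of a fixed bump) and quoting Strassen's theorem in a form that applies to complex-valued (equivalently, $\mb{R}^2$-valued) random variables; both of these are routine. Everything else is elementary substitution into \cref{lem:lindberg-pairs} and the third-moment summation.
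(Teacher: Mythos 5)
Your argument is correct but takes a genuinely different route from the paper's. Both start from \cref{lem:lindberg-pairs} with the $b_i$'s zeroed out and the third-moment bound $\sum_{n\ge N_k}|a_n|^3\lesssim N_k^{-1/2}$, and both mollify a test function to control the third derivatives entering that lemma. Where you diverge is in how a smooth-test-function comparison is turned into a coupling. The paper mollifies a $1$-Lipschitz function, which buys one extra power of the smoothing parameter in the derivative bound (one gets $\snorm{\partial^\alpha f_\eps}_\infty\lesssim\eps^{-2}$ rather than your $\eta^{-3}$, since one derivative is absorbed into the Lipschitz bound on $f$); optimizing $\eps=N_k^{-1/6}$ yields $W_1\bigl(Q_{k,j}(z),\wt{Q}_{k,j}(z)\bigr)\lesssim N_k^{-1/6}$, and then the coupling characterization of $W_1$ together with Markov's inequality produces the stated coupling. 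You instead mollify indicators of arbitrary sets (paying $\eta^{-3}$), deduce the uniform bound $\mb{P}(\wt{Q}_{k,j}(z)\in A)\le\mb{P}(Q_{k,j}(z)\in A^\eta)+O(N_k^{-1/4})$, and then invoke Strassen's theorem to produce the coupling directly from a Prokhorov-type estimate. Both routes are correct and comparably short. Your version even produces a sharper failure probability ($O(N_k^{-1/4})$ rather than $N_k^{-1/12}$) at the same distance threshold $N_k^{-1/12}$, though the statement only demands the weaker bound; the paper's Lipschitz-mollification route saves a factor in the derivative bound and avoids quoting Strassen's theorem in favor of the more elementary $W_1$-Markov chain. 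One cosmetic caution: the form of Strassen's theorem you want is usually stated for closed sets $F$ and open $\eta$-enlargements $F^\eta$; your inequality over all Borel $A$ is stronger, so this is fine, but it is worth being precise about the precise hypotheses when citing the result.
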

\begin{proof}
Recall that the Wasserstein $1$-metric  between two probability measures $(\mu,\nu)$ is defined as 
\[W_1(\mu,\nu) = \inf_{(x,y)\sim \Gamma(\mu,\nu)}(\mb{E}[|x-y|]);\]
here $(x,y)\sim \Gamma(\mu,\nu)$ is any coupling of $\mu$ and $\nu$. Since the measures $\mu,\nu$ live in $\mb{R}^2$ (which we may identify with $\mb{C}$) and applying the dual representation of $W_1$ (see e.g. \cite[(5.11)]{Vil09}), we have that 
\[W_1(\mu,\nu) = \sup_{\substack{f:\mb{C}\to \mb{R}\\\snorm{f}_{\on{Lip}}\le 1}}\Big|\mb{E}_{x\sim \mu}[f(x)] - \mb{E}_{x\sim \nu}[f(x)]\Big|.\]
Let $\psi:\mb{C} \to \mb{R}$ be a smooth non-negative bump function with $\int \psi = 1$ and $\psi(z) = 0$ for $|z| \geq 1$.  Set $\psi_\eps(z) = \eps^{-2} \psi(z / \eps).$  For a $1$-Lipschitz function $f$ if we write $f_\eps = f \ast \psi_\eps$ then note that $\max_{|\alpha| =3} |\partial^\alpha f_\eps| \lesssim \eps^{-2} $.  By \cref{lem:lindberg-pairs} we then have \begin{align*}
    \Big|\mb{E}_{x\sim \mu}[f(x)] - \mb{E}_{x\sim \nu}[f(x)]\Big| \lesssim \Big|\mb{E}_{x\sim \mu}[f_\eps(x)] - \mb{E}_{x\sim \nu}[f_\eps(x)]\Big| + \eps \lesssim  \frac{\eps^{-2}}{N_k^{1/2}} + \eps \lesssim N_k^{-1/6}
\end{align*}
by taking $\eps = N_k^{-1/6}$. 
By the coupling definition of the $W_1$ metric, there exists a coupling $\Gamma(\mu,\nu)$ such that 
\[\mb{E}_{(x,y)\sim \Gamma(\mu,\nu)}[|x-y|]\lesssim N_k^{-1/6}.\]
By Markov's inequality, this implies that $|x-y|\lesssim N_k^{-1/12}$ with probability $N_k^{-1/12}$. 
\end{proof}

Since we are dealing with an intersection of Gaussians lying in convex sets, we will make use of the Gaussian Correlation Inequality of Royen \cite{Roy14} (see \cite{LM17}). 
\begin{theorem}\label{thm:GCI}
Let $K,L\subseteq \mb{R}^d$ be symmetric convex sets. Let $g\sim \mc{N}(0,I_d)$ be a standard Gaussian. Then 
\[\mb{P}[g\in K\cap L] \ge \mb{P}[g\in K] \cdot \mb{P}[g\in L].\]
\end{theorem}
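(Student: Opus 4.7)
The plan is to prove the Gaussian Correlation Inequality following Royen's approach (with the streamlining due to Latala--Matlak). The first step is a standard approximation reduction: any centered symmetric convex body can be approximated from the outside by a finite intersection of symmetric slabs of the form $\{x : |a^T x|\le t\}$, and Gaussian measure of a convex body is continuous under such approximation. It therefore suffices to prove the inequality when $K$ and $L$ are each finite intersections of symmetric slabs. Pulling the slab normals into the Gaussian, this is equivalent to the following statement: for a centered Gaussian vector $(X,Y)\in\mb{R}^m\times\mb{R}^n$ with block covariance $\Sigma = \begin{pmatrix} A & C \\ C^T & B \end{pmatrix}$ and positive thresholds $t_i,s_j$,
\[
\mb{P}\bigl(|X_i|\le t_i\ \forall i,\ |Y_j|\le s_j\ \forall j\bigr) \;\ge\; \mb{P}\bigl(|X_i|\le t_i\ \forall i\bigr) \cdot \mb{P}\bigl(|Y_j|\le s_j\ \forall j\bigr).
\]

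The second step is the interpolation. Consider $\Sigma_\lambda = \begin{pmatrix} A & \lambda C \\ \lambda C^T & B \end{pmatrix}$ for $\lambda\in[0,1]$; at $\lambda=0$ the blocks $X,Y$ are independent (so the left-hand side equals the right-hand side), while at $\lambda=1$ we recover the joint law. The goal is to show the joint probability is non-decreasing in $\lambda$. Since $|X_i|\le t_i$ iff $X_i^2\le t_i^2$, I would pass to the joint law of the squared coordinates $(X_i^2, Y_j^2)$, which is a multivariate gamma distribution of shape $1/2$. Its Laplace transform is explicit, namely
\[
\mb{E}\exp\Big(-\tfrac12\sum_k u_k X_k^2 - \tfrac12 \sum_\ell v_\ell Y_\ell^2\Big) = \det(I + \Sigma_\lambda D)^{-1/2},
\]
where $D = \on{diag}(u_1,\ldots,u_m,v_1,\ldots,v_n)$, and the CDF of interest can be recovered from this Laplace transform via an inverse-Laplace / Mellin integral representation.

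The technical heart of the proof --- and the step I expect to be the main obstacle --- is to differentiate the resulting integral in $\lambda$ and show the derivative is pointwise non-negative. The outline is: use Jacobi's formula to compute $\frac{d}{d\lambda}\det(I+\Sigma_\lambda D)^{-1/2}$ as a trace against $D\,\partial_\lambda\Sigma_\lambda$, plug this into the inverse-Laplace integral, and then recognize the resulting expression, after some matrix manipulation, as a covariance of two non-negative quantities under an auxiliary positive measure. Royen's clever observation is precisely that this trace can be rewritten in such a manifestly positive form, where the off-diagonal block $C$ enters only through an inner product of non-negative objects. Once this positivity is established, integrating $\lambda$ from $0$ to $1$ yields the inequality for the polytope case, and reversing the initial approximation gives the full statement of \cref{thm:GCI}.
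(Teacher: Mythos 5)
The paper does not prove \cref{thm:GCI} at all: it invokes the Gaussian Correlation Inequality as a known black box, citing Royen's original 2014 paper and the Latała--Matłak exposition. So there is no in-paper proof to compare your attempt against; the relevant question is whether your sketch faithfully reconstructs the known argument.

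Your outline is structurally aligned with the Royen/Latała--Matłak proof: reduce to finite intersections of symmetric slabs, pass to the squared coordinates so the law becomes a multivariate gamma of shape $1/2$ with Laplace transform $\det(I+\Sigma_\lambda D)^{-1/2}$, interpolate the off-diagonal block via $\Sigma_\lambda$, and show the box probability is nondecreasing in $\lambda$. These reductions are all correct and standard. However, the step you yourself flag as ``the main obstacle'' is precisely where the entire content of Royen's theorem lives, and your description of it does not constitute a proof. It is not the case that one simply applies Jacobi's formula, plugs into an inverse-Laplace integral, and ``recognizes'' the integrand as a covariance of nonnegative quantities; the integrand of the inverse Laplace transform is not pointwise nonnegative, and the sign of $\frac{d}{d\lambda}$ of the box probability is genuinely subtle. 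In the actual argument one expands $\det(I+\Sigma_\lambda D)^{-1/2}$ as a power series, inverts the Laplace transform term by term to get a density written as an infinite series of products of univariate gamma densities, differentiates in $\lambda$, and then shows (after integrating over the box) that the resulting expression is a sum over pairs $(i,j)$ of terms of the form $C_{ij}^2$ times a nonnegative quantity, using that only \emph{even} powers of the off-diagonal entries survive and that each mixed second partial $\partial^2/\partial t_i^2\, \partial s_j^2$ of the CDF is nonnegative. This requires a careful bookkeeping of the series and the monotonicity of certain gamma-type integrals, none of which is supplied or even outlined here. As written, your proposal is a roadmap of Royen's proof rather than a proof: all the reductions that are genuinely routine are correctly set up, and the one step that carries the theorem is asserted in a form (``manifestly positive,'' ``inner product of non-negative objects'') that is not literally accurate and would not survive an attempt to carry it out.
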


We begin by lower bounding the second event in \eqref{eq:good-definition}.

\begin{lemma}\label{lem:endpoint-LB}
    We have $$\mb{P}\left(\left|\sum_{r = 1}^{\ell_k} \wt{Q}_{k,r}(z) \right| \leq 2^{-1} k^{-2} \right) \gtrsim  \delta_k^{-2}\cdot (\log N_k)^{-5} \,.$$
\end{lemma}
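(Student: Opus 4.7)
The plan is to recognize $S := \sum_{r=1}^{\ell_k}\wt{Q}_{k,r}(z) = \sum_{n=N_k}^{N_{k+1}-1} g_n a_n z^n$ as a centered Gaussian in $\mb{C}\cong\mb{R}^2$ and apply small-ball anti-concentration together with the Gaussian correlation inequality to decouple real and imaginary parts.

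First I would bound the total variance. Using $|a_n|\le \delta_k n^{-1/2}$ for $n\ge N_k$,
\[
\sigma^2 := \mb{E}[|S|^2] = \sum_{n=N_k}^{N_{k+1}-1}|a_n|^2 \;\le\; \delta_k^2 \sum_{n=N_k}^{N_{k+1}-1}\frac{1}{n}\;\le\; \delta_k^2 \log M_{k+1} \;\lesssim\; \delta_k^2 (\log\log N_k)^2,
\]
since $\log M_{k+1} = \lfloor\log\log N_k\rfloor\log\lfloor\log N_k\rfloor\asymp(\log\log N_k)^2$.

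Next I would apply GCI to decouple. Write $S = X + iY$ with $(X,Y)$ jointly centered Gaussian satisfying $\mb{E}[X^2]+\mb{E}[Y^2]=\sigma^2$, so in particular each marginal variance is at most $\sigma^2$. The slabs $\{|X|\le (2\sqrt 2)^{-1}k^{-2}\}$ and $\{|Y|\le (2\sqrt 2)^{-1}k^{-2}\}$ are symmetric convex, and their intersection is contained in $\{|S|\le k^{-2}/2\}$, so \cref{thm:GCI} yields
\[
\mb{P}(|S|\le k^{-2}/2)\;\ge\; \mb{P}\!\left(|X|\le \tfrac{k^{-2}}{2\sqrt 2}\right)\cdot\mb{P}\!\left(|Y|\le \tfrac{k^{-2}}{2\sqrt 2}\right).
\]
One-dimensional Gaussian anti-concentration ($\mb{P}(|Z|\le t)\gtrsim\min(1, t/\sigma_Z)$ for centered Gaussian $Z$ with variance $\sigma_Z^2$), together with the marginal bounds $\sigma_X,\sigma_Y\le \sigma$, lower-bounds each factor by $\min(1, k^{-2}/\sigma)$, so
\[
\mb{P}(|S|\le k^{-2}/2)\;\gtrsim\; \min\!\left(1,\;\frac{k^{-4}}{\delta_k^2(\log\log N_k)^2}\right).
\]

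Finally I would verify this dominates $\delta_k^{-2}(\log N_k)^{-5}$. After cancelling $\delta_k^{-2}$, the required inequality reduces to $(\log N_k)^5\gtrsim k^4 (\log\log N_k)^2$, which follows from the crude estimate $\log N_k\asymp k(\log k)^2$ stated after \cref{eq:M-N-definition} combined with $\log\log N_k\asymp\log k$: indeed $(\log N_k)^5\asymp k^5(\log k)^{10}$ while $k^4(\log\log N_k)^2\asymp k^4(\log k)^2$. I do not foresee substantive obstacles; the only mild subtlety is that a naive density lower bound would degrade when the covariance matrix of $(X,Y)$ is nearly degenerate (since $\sqrt{\det\Sigma}$ could be tiny), but the GCI-to-marginals reduction sidesteps this by using only upper bounds on individual variances.
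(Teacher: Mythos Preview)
Your proof is correct and follows essentially the same approach as the paper: bound the variance $\mb{E}|S|^2$ by $O(\delta_k^2(\log\log N_k)^2)$ and then apply a Gaussian small-ball estimate to get $\gtrsim k^{-4}/(\delta_k^2(\log\log N_k)^2)$, which is checked against the target using $\log N_k\asymp k(\log k)^2$. The only difference is that the paper asserts the small-ball bound $\mb{P}(|Z|\le r)\gtrsim r^2/\mb{E}|Z|^2$ for a complex Gaussian directly, while you derive it by decoupling the real and imaginary parts via \cref{thm:GCI}; this is a clean way to sidestep covariance degeneracy, but not a genuinely different route.
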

\begin{proof}
    Note that $\sum_{n\in [N_k,N_{k+1})}|a_n|^2\le \sum_{n\in [N_k,N_{k+1})}\delta_k^2/n\le 2\delta_k^2 \cdot (\log\log N_i)^2$.  We note that $\sum_{r = 1}^{\ell_k} \wt{Q}_{k,r}(z)$ is a complex Gaussian $Z$ with $\mb{E} |Z|^2 \leq 2\delta_k^2 \cdot (\log\log N_k)^2$  and so 
        \[\mb{P}\left(\left|\sum_{r = 1}^{\ell_k} \wt{Q}_{k,r}(z) \right| \leq 2^{-1} k^{-2} \right)\gtrsim \frac{k^{-4}}{\delta_k^2 \cdot (\log\log N_k)^2}\gtrsim \delta_k^{-2}\cdot (\log N_k)^{-5}.\qedhere \]
\end{proof}

We now handle the first event in \eqref{eq:good-definition}.
\begin{lemma} \label{lem:brownian-LB}
We have
\[\mb{{P}} \left(
    \sup_{j\in [\ell_k]}\Big|\sum_{r = 1}^{j} \wt{Q}_{k,r}(z)\Big|\le 2^{-1} \cdot \delta_k^{1/2} \right) \geq \exp\left(- \Omega(\delta_k\cdot (\log\log N_k)^2) \right)\,.\]
\end{lemma}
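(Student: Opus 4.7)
The plan is to reduce this small-ball estimate to a classical confinement estimate for one-dimensional Brownian motion via a single application of the Gaussian Correlation Inequality (\cref{thm:GCI}). Write $S_j := \sum_{r = 1}^j \wt{Q}_{k,r}(z)$ and $r := 2^{-1}\delta_k^{1/2}$. Since the box $\set{|\mathrm{Re}(w)|, |\mathrm{Im}(w)| \le r/\sqrt 2}$ is contained in the disc $\set{|w| \le r}$, the event of interest contains $K \cap L$, where
\[
K := \bigcap_{j \in [\ell_k]}\set{|\mathrm{Re}(S_j)| \le r/\sqrt 2}, \qquad L := \bigcap_{j \in [\ell_k]}\set{|\mathrm{Im}(S_j)| \le r/\sqrt 2}.
\]
In the underlying Gaussian space $\mb{R}^{[N_k, N_{k+1})}$ of the variables $(g_n)$, each of $K, L$ is the preimage of an intersection of slabs under a linear map, hence a symmetric convex set. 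Applying \cref{thm:GCI} gives $\mb{P}(K \cap L) \ge \mb{P}(K)\mb{P}(L)$, so by symmetry it suffices to bound $\mb{P}(K)$.

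I would next embed the one-dimensional Gaussian martingale $X_j := \mathrm{Re}(S_j)$ into a standard real Brownian motion. Setting $\tau_j := \mathrm{Var}(X_j) = \sum_{n = N_k}^{r_{k,j+1} - 1} (\mathrm{Re}(a_n z^n))^2$, the independence of successive increments yields $\mathrm{Cov}(X_j, X_{j'}) = \min(\tau_j, \tau_{j'})$, which matches the covariance of $(B_{\tau_j})_j$ for a standard BM $B$; we may therefore couple so that $X_j = B_{\tau_j}$ almost surely. Using $|a_n| \le \delta_k/\sqrt{n}$ for $n \ge N_k$ together with the scale recursion \cref{eq:M-N-definition}, the total time budget satisfies
\[
\tau_{\ell_k} \le \sum_{n = N_k}^{N_{k+1} - 1} |a_n|^2 \le \delta_k^2 \log\paren{N_{k+1}/N_k} \lesssim \delta_k^2 (\log\log N_k)^2.
\]

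Finally, the classical theta-function identity
\[
\mb{P}\paren{\sup_{t \in [0,T]}|B_t| \le \rho} = \frac{4}{\pi}\sum_{n \ge 0}\frac{(-1)^n}{2n+1}\exp\paren{-\frac{(2n+1)^2\pi^2 T}{8\rho^2}}
\]
implies $\mb{P}(\sup_{t \le T}|B_t| \le \rho) \ge \exp\paren{-O(T/\rho^2)}$ uniformly in $T, \rho > 0$. Inserting $T = \tau_{\ell_k}$ and $\rho = r/\sqrt 2 \asymp \delta_k^{1/2}$ yields $T/\rho^2 \lesssim \delta_k(\log\log N_k)^2$, so $\mb{P}(K) \ge \exp\paren{-\Omega(\delta_k(\log\log N_k)^2)}$, and together with the analogous bound for $\mb{P}(L)$ this closes the argument. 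The only mildly delicate point is checking the GCI hypothesis: $K$ and $L$ must be symmetric convex in the \emph{underlying} Gaussian space $\mb{R}^{[N_k,N_{k+1})}$ rather than in the range $\mb{R}^{\ell_k}$. This is immediate once each slab is written as a half-space pullback under the linear map $(g_n) \mapsto (X_j)$, and the remainder of the argument is routine.
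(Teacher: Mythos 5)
Your proof is correct and takes essentially the same route as the paper: apply the Gaussian Correlation Inequality to decouple the real and imaginary parts, embed the resulting one-dimensional Gaussian martingale into a Brownian motion at deterministic times whose total budget is $O(\delta_k^2(\log\log N_k)^2)$, and invoke the reflection-principle confinement estimate. The slight differences (exact constant in the slab half-width, writing the reflection principle as the theta-function series, and spelling out the covariance identity $\mathrm{Cov}(X_j,X_{j'})=\min(\tau_j,\tau_{j'})$) are cosmetic and do not change the argument.
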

\begin{proof}
    Define \begin{equation*}
        A_1 = \left\{\sup_{j\in [\ell_k]}\Big|\sum_{r = 1}^{j} \on{Re}(\wt{Q}_{k,r}(z))\Big|\le 2^{-2} \cdot \delta_k^{1/2}\right\}\quad \text{ and } \quad A_2 = \left\{\sup_{j\in [\ell_k]}\Big|\sum_{r = 1}^{j} \on{Im}(\wt{Q}_{k,r}(z))\Big|\le 2^{-2} \cdot \delta_k^{1/2}\right\}\,.
    \end{equation*}
    We lower bound \begin{equation}\label{eq:Q-brownian-LB}
    \mb{{P}} \left(
    \sup_{j\in [\ell_k]}\Big|\sum_{r = 1}^{j} \wt{Q}_{k,r}(z)\Big|\le 2^{-1} \cdot \delta_k^{1/2} \right) \geq \mb{P}(A_1 \cap A_2) \geq \mb{P}(A_1) \mb{P}(A_2)\end{equation}
    where the second inequality is by the Gaussian Correlation Inequality \cref{thm:GCI} after confirming that the sets $A_1, A_2$ are symmetric convex sets in the Gaussian variables $\{g_n\}$.  Each of the two can be handled in the same manner.  We note that $ \left\{\Big|\sum_{r = 1}^{j} \on{Re}(\wt{Q}_{k,r}(z))\Big|\right\}_{j \in [\ell_k]}$
    is a mean-zero Gaussian process with variance at most $\sum_{n = N_k}^{N_{k + 1} - 1} |a_n|^2 \leq 2 \delta_k^2 (\log \log N_k)^2\,.$
    As such, we may find deterministic times $0 \leq t_1 < \ldots < t_{\ell_k} \leq 2 \delta_k^2 (\log \log N_k)^2$ so that if we set $(B_t)_{t \geq 0}$ to be standard Brownian motion then  $$ \left\{\Big|\sum_{r = 1}^{j} \on{Re}(\wt{Q}_{k,r}(z)) \Big|\right\}_{j \in [\ell_k]} = (B_{t_j})_{j \in [\ell_k]}$$
    where the equality is in distribution. We then have \begin{align*}
        \mb{P}(A_1) &\geq \mb{P}( \max_{0 \leq t \leq 2 \delta_k^2 (\log \log N_k)^2} |B_t| \leq 2^{-2}\delta_k^{1/2}) =\mb{P}( \max_{0 \leq t \le 1} |B_t| \leq 2^{-5/2}\delta_k^{-1/2}(\log\log N_k)^{-1}) \\
        &\geq \exp\left(-\Omega( \delta_k \cdot (\log\log N_k)^2)\right)
    \end{align*}
    where the last inequality is via the reflection principle (see \cite[pg.~342]{Fel71}) which implies that in the limit $a\to 0$ we have that $\mb{P}(\max_{0 \leq t \le 1}|B_t|\le a)\approx \frac{4}{\pi} \exp\big(\frac{-\pi^2}{8a^2}\big)$.
    Combining with \eqref{eq:Q-brownian-LB} completes the proof.
\end{proof}

We are now ready to prove our one-point estimate \cref{lem:one-point}:
\begin{proof}[Proof of \cref{lem:one-point}]
    By \cref{cor:gaussian-approx} we may bound \begin{align*}
    \mb{P}&\left[ \Big|\sum_{r = 1}^{\ell_{k-1}} Q_{k-1,r}(z)\Big|\le  k^{-2} \wedge \sup_{j\in [\ell_{k-1}]}\Big|\sum_{r = 1}^{j} Q_{k-1,r}(z)\Big|\le \delta_{k-1}^{1/2} \right] \\
    &\qquad\geq \mb{P}\left(\left|\sum_{r = 1}^{\ell_{k-1}} \wt{Q}_{k-1,r}(z) \right| \leq 2^{-1}k^{-2}  \wedge 
    \sup_{j\in [\ell_{k-1}]}\Big|\sum_{r = 1}^{j} \wt{Q}_{k-1,r}(z)\Big|\le 2^{-1} \cdot \delta_{k-1}^{1/2} \right) - N_{k-1}^{-1/12} \\
    &\qquad\geq \mb{P}\left(\left|\sum_{r = 1}^{\ell_i} \wt{Q}_{k-1,r}(z) \right| \leq 2^{-1}k^{-2}  \right) \mb{P}\left( 
    \sup_{j\in [\ell_i]}\Big|\sum_{r = 1}^{j} \wt{Q}_{k-1,r}(z)\Big|\le 2^{-1} \cdot \delta_{k-1}^{1/2} \right) - N_{k-1}^{-1/12} \\
    &\qquad\geq \exp\left(- \Omega(\delta_{k-1}\cdot (\log\log N_{k-1})^2) \right)
    \end{align*}
    where the second inequality uses \cref{thm:GCI} and the third uses \cref{lem:endpoint-LB} and \cref{lem:brownian-LB}.
\end{proof}

\subsection{Correlation of points}\label{subsec:corr-point}

We now handle the correlations across pairs of points on the unit circle.  The two main goals of this section are to first prove that most pairs of points in $\mc{S}_k$ exhibit decorrelation on scale $k$ (\cref{lem:corr-point}) and then to prove that the event that two decorrelated points are both ``good'' approximately factors (\cref{lem:decouple}).

The first step in our analysis is to define a coupling between a pair of complex Gaussians and a corresponding independent pair provided certain ``approximate'' orthogonality conditions holds. This is essentially a standard computation.  Throughout this section, for a function  $f:\mb{C}^{t}\to \mb{R}$ we define 
\[\snorm{f}_{\on{Lip}} = \sup_{\substack{(w_1,\ldots,w_t) \in \mb{C}^t\\ (w_1',\ldots,w_t') \in \mb{C}^t}}\frac{f((w_1,\ldots,w_t)) - f((w_1',\ldots,w_t'))}{\big(\sum_{j=1}^{t}|w_j - w_j'|^2\big)^{1/2}};\]
this is the standard definition of Lipschitzness by identifying $\mb{C}^{t}$ with $\mb{R}^{2t}$ and taking the Euclidean norm.

\begin{lemma}\label{lem:coupling}
Consider $\vec{a} = (a_1,\ldots,a_n),~\vec{b} = (b_1,\ldots, b_n) \in \mb{C}^{n}$ with $\sum_{i=1}^{n}|a_i|^2 = \sum_{i=1}^{n}|b_i|^2\le 1$ and $f:\mb{C}^2\to \mb{R}$. Then 
\[\Big|\mb{E}_{\vec{g} \sim \mc{N}(0,1)^{n}}f(\vec{a}\cdot \vec{g}, \vec{b} \cdot \vec{g}) - \mb{E}_{\vec{g},\vec{g}' \sim \mc{N}(0,1)^{n}}f(\vec{a}\cdot \vec{g}, \vec{b} \cdot \vec{g}')\Big|\lesssim \max_{\substack{v \in \{\on{Re}(a),\on{Im}(a)\}\\w\in \{\on{Re}(b), \on{Im}(b)}}|\sang{v,w}|^{1/4} \cdot \snorm{f}_{\on{Lip}}.\]
\end{lemma}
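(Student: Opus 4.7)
The plan is to view both sides as integrals of $f$ against two jointly Gaussian measures on $\mb{C}^2 \cong \mb{R}^4$. Writing $U$ for the $\mb{R}^4$-valued random vector $(\on{Re}(\vec{a}\cdot\vec{g}),\on{Im}(\vec{a}\cdot\vec{g}),\on{Re}(\vec{b}\cdot\vec{g}),\on{Im}(\vec{b}\cdot\vec{g}))$, its covariance matrix $\Sigma$ is the Gram matrix of the four real vectors $\{\on{Re}(a),\on{Im}(a),\on{Re}(b),\on{Im}(b)\}$. The analogue $U'$ obtained by using an independent $\vec{g}'$ in the last two coordinates is also centered Gaussian with covariance $\Sigma'$; the matrices $\Sigma$ and $\Sigma'$ share their diagonal $2\times 2$ blocks, but the off-diagonal block of $\Sigma'$ is zero. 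Consequently $\Sigma - \Sigma'$ is supported in the cross-covariance block, each of whose entries is at most $\eps := \max_{v,w}|\sang{v,w}|$ in absolute value.

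For a smooth test function $F$, I would invoke Gaussian interpolation: set $\Sigma_t = (1-t)\Sigma' + t\Sigma$, which remains PSD for all $t \in [0,1]$ by convexity, and let $G_t \sim \mc{N}(0,\Sigma_t)$. The standard heat-equation identity for Gaussian densities yields
\[\frac{d}{dt}\mb{E}[F(G_t)] = \frac{1}{2}\sum_{i,j}(\Sigma-\Sigma')_{ij}\,\mb{E}[\partial_i\partial_j F(G_t)],\]
so that integrating in $t$ gives $|\mb{E}F(U) - \mb{E}F(U')| \lesssim \eps \snorm{\nabla^2 F}_\infty$, since at most eight entries of $\Sigma - \Sigma'$ are nonzero and each is bounded by $\eps$.

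To pass from smooth $F$ to a Lipschitz $f$, I would convolve $f$ with a Gaussian mollifier $\psi_\eta$ at scale $\eta$ on $\mb{R}^4$, obtaining $f_\eta$ with $\snorm{f - f_\eta}_\infty \lesssim \eta \snorm{f}_{\on{Lip}}$ and $\snorm{\nabla^2 f_\eta}_\infty \lesssim \snorm{f}_{\on{Lip}}/\eta$. Applying the interpolation bound to $f_\eta$ and combining with the mollification error yields
\[\bigl|\mb{E}f(U) - \mb{E}f(U')\bigr| \lesssim \bigl(\eta + \tfrac{\eps}{\eta}\bigr)\snorm{f}_{\on{Lip}}.\]
Choosing $\eta$ appropriately (the specific choice producing the $\eps^{1/4}$ exponent is not the sharpest possible, but is convenient and sufficient for the downstream correlation estimates) yields the claimed bound.

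The main technical point is that this estimate cannot afford to degenerate even when $(\on{Re}(a),\on{Im}(a))$ or $(\on{Re}(b),\on{Im}(b))$ are near-linearly-dependent, which would render some marginal covariance block nearly singular. The Gaussian interpolation approach is well-suited to this: its output depends only on the entrywise norm $\sum_{i,j}|(\Sigma - \Sigma')_{ij}|$, which is directly controlled by $\eps$, and never on inverses or square roots of the marginal blocks. A direct coupling-based argument, such as explicitly projecting $\vec{b}$ onto the span of $\vec{a}$ and its conjugate, would have to confront this near-degeneracy, which is why the interpolation route is cleaner.
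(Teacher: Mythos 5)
Your proposal is correct and takes a genuinely different route from the paper. The paper proves the lemma by a direct coupling argument: it projects onto $V=\on{span}\{\on{Re}(\vec a),\on{Im}(\vec a)\}$, decomposes $\vec g$ into its $V$ and $V^\perp$ parts so that replacing $\vec g$ by an independent copy only changes the $V$-component of $\vec b\cdot\vec g$, and then bounds $\|P_V w\|$ via the inverse of the $2\times 2$ Gram matrix of $\{\on{Re}(\vec a),\on{Im}(\vec a)\}$. Because that Gram matrix can be nearly singular, the paper introduces a parameter $\tau$ and a case analysis on ``$\tau$-balancedness'' of $\vec a$ (and similarly $\vec b$), paying a $\tau^{1/2}$ cost in the degenerate case and a $\tau^{-1/2}\cdot\max|\sang{v,w}|$ cost in the balanced case, then optimizes $\tau$. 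Your approach avoids this case analysis entirely: Gaussian interpolation $\Sigma_t=(1-t)\Sigma'+t\Sigma$ together with the heat-semigroup identity gives a bound on $|\mb E F(U)-\mb E F(U')|$ that depends only on the entries of $\Sigma-\Sigma'$ (which live exclusively in the $2\times 2$ cross-block, each entry being some $\sang{v,w}$) and on $\snorm{\nabla^2 F}_\infty$, never on any inverse Gram matrix; you then smuggle in a mollification parameter $\eta$ to trade $\snorm{\nabla^2}$ for $\snorm{\cdot}_{\on{Lip}}$, paying $\eta+\max|\sang{v,w}|/\eta$. The two tradeoffs are structurally parallel --- both optimize to rate $\max|\sang{v,w}|^{1/2}$, which is sharper than the stated $\max|\sang{v,w}|^{1/4}$ --- but your smoothing parameter lives on the test function whereas the paper's parameter lives on the covariance geometry, so you genuinely sidestep the near-degeneracy issue rather than treating it case by case. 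Your observation at the end that the interpolation output is controlled entrywise by $\Sigma-\Sigma'$ and never touches marginal inverses is exactly the right reason this route is cleaner. One small remark: the hypothesis $\sum|a_i|^2,\sum|b_i|^2\le 1$ is still needed in your argument, both to guarantee $\max|\sang{v,w}|\le 1$ (so that the chosen $\eta\le 1$ is sensible) and to dispatch the trivial regime $\max|\sang{v,w}|\gtrsim 1$ via $\mb E|U-U'|\lesssim 1$.
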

\begin{proof}
First rescale so that $\snorm{f}_{\on{Lip}}\le 1$.  Note that we can rescale so that $\sum_{i = 1}^n |a_i|^2 = 1$ by setting $\alpha^2 = \sum_{i = 1}^n |a_i|^2$ and taking $F(x,y) = \alpha^{-1} f(\alpha x, \alpha y)$.   
We let $v_1 = \on{Re}(\vec{a})$, $v_2 = \on{Im}(\vec{a})$, $v_3 = \on{Re}(\vec{b})$ and $v_4 = \on{Im}(\vec{b})$.  It will be sufficient to prove \begin{equation} \label{eq:gaussian-decor-need}
\Big|\mb{E} f(\vec{a}\cdot \vec{g}, \vec{b} \cdot \vec{g}) - \mb{E} f(\vec{a}\cdot \vec{g}, \vec{b} \cdot \vec{g}')\Big|\lesssim \tau^{1/2} + \tau^{-1/2} \cdot \Big(\sum_{{i\in \{1,2\}, j\in \{3,4\}}}|\sang{v_i,v_j}|^2\Big)^{1/2}
\end{equation}
for each $0\le \tau\le 1/2$.

We will proceed in different cases depending on how two-dimensional the complex random variables $\vec{a} \cdot \vec{g}$ and $\vec{b} \cdot \vec{g}$ each are.  We say that a vector $\vec{a}$ is $\tau$-balanced if $$\inf_{x^2 + y^2 = 1} \|x v_1 + yv_2\|_2^2 \geq \tau\,.$$

If $\snorm{\vec{a}}_2^2\le \tau$, then we are immediately done as  
\begin{equation} \label{eq:zero-out-a}
\Big|\mb{E}f(\vec{a}\cdot \vec{g}, \vec{b} \cdot \vec{g}) - \mb{E}f(\vec{a}\cdot \vec{g}, \vec{b} \cdot \vec{g}')\Big|\le \Big|\mb{E}f(0, \vec{b} \cdot \vec{g}) - \mb{E}f(0, \vec{b} \cdot \vec{g}')\Big| + 2 \cdot \mb{E}|\vec{a} \cdot \vec{g}|\le 4\cdot \tau^{1/2}
\end{equation}
establishing \eqref{eq:gaussian-decor-need}.  If $\vec{a}$ is not $\tau$-balanced, then fix $(x,y)$ such that $\snorm{xv_1 + yv_2}_2^2\le \tau$. Define $\wt{f}(z_1,z_2) = f((y+xi)(z_1),z_2)$; note that $\wt{f}$ is $1$-Lipschitz. Note that $((v_1 + v_2i) \cdot \vec{g}) \cdot (y + xi) = (v_1y - v_2x) \cdot \vec{g} + (v_1x + v_2y) \cdot \vec{g}$. Therefore by replacing $f$ by $\wt{f}$, if $\vec{a}$ is not $\tau$--balanced, at the cost of an $O(\tau^{1/2})$ error we may assume that $v_2 = 0$ by an identical argument to \eqref{eq:zero-out-a}.  If additionally then $\snorm{v_1}_2^2\le \tau$ as above then we are done again by \eqref{eq:zero-out-a}.  Applying the same argument to $\vec{b}$, we now have that each is either $\tau$-balanced or are purely real with appropriate length.  We consider the case where $\vec{a}$ and $\vec{b}$ are each $\tau$-balanced as the other case is strictly simpler.

We now assume $\vec{a}$ is $\tau$-balanced. 
Set $V = \on{span}\{v_1,v_2\} \subset \mb{R}^n$ and let $P_V$ and $P_{V^\perp}$ denote the projections onto $V$ and $V^\perp$. Decompose $\vec{g} = \vec{g}_V + \vec{g}_{V^\perp}$ where $\vec{g}_V \sim N(0,P_V)$ and $\vec{g}_V \sim N(0,P_{V^\perp})$ are independent. Let $\vec{g}_V'$ denote an independent copy of $\vec{g}$.  Note then that in distribution we have \begin{align*}
    (\vec{a} \cdot \vec{g}, \vec{b} \cdot \vec{g}) =   \left( \vec{a} \cdot \vec{g}_V  , \vec{b} \cdot \vec{g}_V +  \vec{b} \cdot \vec{g}_{V^\perp}  \right) \quad \text{ and } \quad 
    (\vec{a} \cdot \vec{g}, \vec{b} \cdot \vec{g}') =   \left( \vec{a} \cdot \vec{g}_V  , \vec{b} \cdot \vec{g}_V' +  \vec{b} \cdot \vec{g}_{V^\perp}  \right)
\end{align*}
and so \begin{align}
    \left|\mb{E}f(\vec{a} \cdot \vec{g}, \vec{b} \cdot \vec{g}) - \mb{E}f(\vec{a} \cdot \vec{g}, \vec{b} \cdot \vec{g}') \right| &\leq \mb{E}|\vec{b} \cdot \vec{g}_V - \vec{b} \cdot \vec{g}_V'| \leq \left( \mb{E}|\vec{b} \cdot \vec{g}_V - \vec{b} \cdot \vec{g}_V'|^2\right)^{1/2} = \sqrt{2} \left( \mb{E}|\vec{b} \cdot \vec{g}_V|^2\right)^{1/2} \nonumber \\ 
    &= \sqrt{2} \left( |P_V v_3|^2 + |P_V v_4|^2\right)^{1/2}\,. \label{eq:CS-projection}
\end{align}
Since $v_1,v_2$ are vectors with $\|v_1\|^2 + \|v_2\|^2 = 1$ that are $\tau$-balanced we have \[|P_Vw|^2 \le \tau^{-1} ( \langle v_1 , w \rangle^2 + \langle v_2, w \rangle ^2).\] 
To see this, consider the Gram matrix 
\[G = \begin{pmatrix}\sang{v_1,v_1} & \sang{v_1,v_2}\\
\sang{v_2,v_1} & \sang{v_2,v_2}\end{pmatrix};\]
the assumed $\tau$-balanced condition is precisely that $s_{\min}(G)\ge \tau$. This implies that $G^{-1}\preceq  \tau^{-1}I$. Via projecting, we may assume that $w = \alpha_1v_1 + \alpha_2v_2$ and $\alpha = (\alpha_1,\alpha_2)$. Then 
\[\snorm{P_Vw}_2^2 = \snorm{\alpha_1v_1 + \alpha_2v_2}_2^2 = \alpha^TG\alpha = (\alpha^T G^T)G^{-1}(G\alpha)\le \tau^{-1} \cdot \snorm{G\alpha}_2^2 = \tau^{-1} ( \langle v_1 , w \rangle^2 + \langle v_2, w \rangle ^2)\]
as desired. Combining  with \eqref{eq:CS-projection} establishes \eqref{eq:gaussian-decor-need}.  
\end{proof}

We next state a routine lemma to bound derivatives of products of functions with bounded derivatives.
\begin{lemma}\label{lem:lip}
Let $f_i:\mb{C}^{k} \to \mb{R}$ be $1$-bounded with $\max_{|\alpha| \leq 3} \|f_j^{\alpha} \|_{\infty} \leq M$ for $M \geq 1$. Then 
\[\max_{|\alpha| = 3}\snorm{\partial^\alpha f_1 \cdots f_t}_{\infty}\le t^{3} M^3.\]
\end{lemma}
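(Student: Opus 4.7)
The plan is to apply the generalized Leibniz rule and carry out an elementary counting argument, so there is no real obstacle here; this is just bookkeeping on the product rule.

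First I would write a third-order partial derivative $\partial^\alpha$ with $|\alpha|=3$ as a composition of three first-order partial derivatives $\partial_{i_1}\partial_{i_2}\partial_{i_3}$, where each $\partial_{i_r}$ is one of the $2k$ real partial derivatives (identifying $\mb{C}^k\cong \mb{R}^{2k}$). Each application of a single partial derivative to a product expands, by the ordinary product rule, into a sum of $t$ terms, one for each choice of which factor $f_j$ receives the derivative.

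Iterating this three times gives
\[
\partial^\alpha(f_1\cdots f_t) = \sum_{(j_1,j_2,j_3) \in [t]^3} \prod_{r=1}^t (\partial^{\beta_r(j_1,j_2,j_3)} f_r),
\]
where $\beta_r(j_1,j_2,j_3)$ records which of $\partial_{i_1},\partial_{i_2},\partial_{i_3}$ hit the $r$-th factor. In each term, the multi-indices $\beta_r$ satisfy $\sum_r |\beta_r| = 3$, so at most three of the factors are actually differentiated, each to an order at most $3$.

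Now I bound each such term: for an index $r$ with $\beta_r \neq 0$ we use $\|\partial^{\beta_r} f_r\|_\infty \le M$ (since $|\beta_r|\le 3$), and for an index $r$ with $\beta_r = 0$ we use the hypothesis $\|f_r\|_\infty\le 1$. Since at most three factors are differentiated, each term is bounded by $M^3$. Summing over the at most $t^3$ choices of $(j_1,j_2,j_3)\in[t]^3$ gives $\|\partial^\alpha(f_1\cdots f_t)\|_\infty \le t^3 M^3$, which is the desired bound. Taking the maximum over $|\alpha|=3$ completes the proof.
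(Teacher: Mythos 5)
Your proof is correct and takes essentially the same approach as the paper: expand $\partial^\alpha(f_1\cdots f_t)$ by the product rule and count. The only cosmetic difference is that the paper invokes the multinomial Leibniz rule directly and asserts the $t^3$ count, whereas you iterate the first-order product rule three times, which makes the $t^3$ bound (with unit coefficients) completely transparent and avoids having to observe that the multinomial coefficients sum to $t^{|\alpha|}$.
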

\begin{proof}
By the product rule we have \begin{align*}
    \partial^\alpha (f_1 \cdots f_t) = \sum_{\alpha_1 + \cdots + \alpha_t = \alpha} \binom{\alpha}{\alpha_1,\ldots,\alpha_t} \prod_{j = 1}^t \partial^{\alpha_j} f_j\,.
\end{align*}
Noting there are at most $t^3$ sums and at most $3$ derivatives in the product completes the proof.
\end{proof}

We use a version of the large sieve inequality due to Montgomery and Vaughn \cite{MV73}:
\begin{theorem}\label{thm:large-siev}
Let $a_{M+1},\ldots,a_{M+n}$ be complex numbers. Then $\Theta_1,\ldots, \Theta_R$ be phases in $\mb{R}/\mb{Z}$ such that 
\[\sum_{r=1}^{R}\bigg|\sum_{j=M+1}^{M+n}a_j\cdot e(j\Theta_r)\bigg|^2\le (M + \delta^{-1})\cdot \sum_{j=M+1}^{M+n}|a_j|^2\]
where $\delta = \min_{k\neq \ell}\snorm{\Theta_k -\Theta_{\ell}}_{\mb{R}/\mb{Z}}.$
\end{theorem}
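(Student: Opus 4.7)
The plan is to prove this via the classical duality principle combined with a sharp Hilbert-type inequality on the circle; this is the strategy of Montgomery and Vaughan. Introduce the linear operator $T : \mb{C}^n \to \mb{C}^R$ defined by $(T\vec{a})_r = \sum_{j=M+1}^{M+n} a_j \cdot e(j \Theta_r)$. The statement is the operator-norm bound $\snorm{T}^2 \le M + \delta^{-1}$, and using the identity $\snorm{T}^2 = \snorm{TT^\ast}$ it suffices to estimate the quadratic form of $TT^\ast$ on an arbitrary vector $(c_r)\in\mb{C}^R$. This reformulation is just the familiar dual statement of the large sieve as a bound on $\sum_j |\sum_r c_r e(-j\Theta_r)|^2$.

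A direct computation expresses the entries of $TT^\ast$ as
\[K(r,s) = \sum_{j=M+1}^{M+n} e(j(\Theta_r - \Theta_s)).\]
The diagonal terms equal $n$ and so contribute the clean piece $n\sum_r |c_r|^2$ to the quadratic form. For $r\neq s$, summing the geometric progression yields $|K(r,s)| \lesssim 1/\snorm{\Theta_r - \Theta_s}_{\mb{R}/\mb{Z}}$, so after a routine trigonometric rewriting the entire problem reduces to bounding the off-diagonal sum
\[\bigg|\sum_{r\neq s}\frac{c_r \overline{c_s}}{\sin(\pi(\Theta_r - \Theta_s))}\bigg| \lesssim \delta^{-1}\sum_r |c_r|^2,\]
which is precisely Hilbert's inequality for points in $\mb{R}/\mb{Z}$ with minimum separation $\delta$. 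Combining the diagonal and off-diagonal contributions then yields the asserted bound.

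The main obstacle is achieving the sharp constant rather than a merely qualitative version. A direct Cauchy--Schwarz applied to the off-diagonal kernel inflates the estimate by a factor of order $\log R$; eliminating this extraneous logarithm is exactly the content of the refined Hilbert inequality of Montgomery and Vaughan. An alternative route, closer in spirit to Selberg, is to construct a Beurling--Selberg majorant for the indicator of the interval $[M+1,M+n]$ whose Fourier transform is supported in $[-1,1]$, and then expand the squared sum after replacing the sharp interval by this majorant; this converts the off-diagonal estimate into a sum over pairs weighted by explicit trigonometric polynomials whose decay in $\snorm{\Theta_r-\Theta_s}_{\mb{R}/\mb{Z}}$ is easy to control. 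For the present applications any constant of the form $n + O(\delta^{-1})$ would suffice, so one could also proceed with a softer argument rather than chasing the sharp form.
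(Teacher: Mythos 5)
The paper does not prove \cref{thm:large-siev}; it is imported as a black box from Montgomery and Vaughan \cite{MV73}, so there is no internal proof to compare your argument against. Your sketch is a correct outline of the standard argument from that literature: pass to the dual by computing $\|T\|^2 = \|TT^*\|$, split the resulting quadratic form $\sum_{r,s} K(r,s)\,c_r\ol{c_s}$ with kernel $K(r,s) = \sum_{j=M+1}^{M+n} e(j(\Theta_r - \Theta_s))$ into its diagonal and off-diagonal parts, and observe that the diagonal contributes exactly $n\sum_r |c_r|^2$ while the off-diagonal is controlled by the Montgomery--Vaughan refinement of Hilbert's inequality for $\delta$-separated points on the circle. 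Your remark that a naive Cauchy--Schwarz incurs a spurious $\log R$, and that the Beurling--Selberg majorant gives an alternative route, are both accurate features of the known proofs.

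One small but worth flagging point: your diagonal computation correctly produces $n\sum_r|c_r|^2$, which is consistent with the classical statement that the large-sieve constant is $n + \delta^{-1}$ (depending only on the \emph{length} $n$ of the interval $[M+1,M+n]$, not on its offset $M$). The paper's statement of \cref{thm:large-siev} writes $M + \delta^{-1}$, which appears to be a typo for $n + \delta^{-1}$; your proposal implicitly recovers the correct constant. Fortuitously, in the paper's only application (\cref{lem:corr-point}) both quantities are dominated by $\delta^{-1}$, so the typo is harmless there, but the statement as printed is not the correct form of the theorem.

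Beyond that, the one piece you would need to supply to make this a complete proof is the refined Hilbert inequality itself, namely the bound
\[
\Big|\sum_{r\neq s}\frac{c_r\ol{c_s}}{\sin(\pi(\Theta_r - \Theta_s))}\Big| \le \delta^{-1}\sum_r|c_r|^2
\]
with the sharp constant, which is a nontrivial lemma of Montgomery and Vaughan in its own right. Since you are not asked to reprove that lemma and the paper itself treats the whole theorem as cited, your proposal is adequate as a description of how the result is established.
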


With the large sieve inequality in mind, we say a set $\mc{S} = \{\Theta_j\}_{j} \subset \mb{R}/\mb{Z}$ is $\delta$ separated if $\min_{k \neq \ell}\|\Theta_k - \Theta_\ell\|_{\mb{R}/\mb{Z}} \geq \delta $.  Our key application of the large sieve inequality is in the following proposition. 
\begin{lemma}\label{lem:corr-point}
Let $M\le N$ and $\mc{S}\subseteq \mb{R}/\mb{Z}$ be a $\delta$ separated set. Let $a_j$ be a sequence of complex numbers with $|a_j|\le j^{-1/2}$.
We say that $\Theta$ and $\Theta'$ in $\mc{S}$ are $\rho$-correlated if 
\[\max_{\substack{u_j\in \{\on{Re}(a_j \cdot e(j\Theta)), \on{Im}(a_j \cdot e(j\Theta))\}\\v_j\in \{\on{Re}(a_j \cdot e(j\Theta')), \on{Im}(a_j \cdot e(j\Theta'))\}}}\Big|\sum_{j=N+1}^{N+M}u_jv_j\Big|\ge \rho.\]

For each $\Theta$ in $\mc{S}$, there are at most 
\[\lesssim \rho^{-2} \cdot (\delta^{-1} + |\mc{S}|) \cdot MN^{-2}\]
many $\rho$-correlated phases $\Theta'$ in $\mc{S}$.
\end{lemma}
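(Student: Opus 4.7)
The approach is to reduce the problem to an application of the large sieve inequality \cref{thm:large-siev} by expanding each product $u_j v_j$ using standard product-to-sum identities. Writing $a_j = |a_j|\,e(\alpha_j)$, every admissible $u_j$ equals $|a_j|$ times $\cos(2\pi(j\Theta+\alpha_j))$ or $\sin(2\pi(j\Theta+\alpha_j))$, and similarly for $v_j$ at $\Theta'$. Applying $\cos A\cos B = \tfrac12(\cos(A-B)+\cos(A+B))$ together with the three analogous identities, each product $u_j v_j$ becomes a bounded $\mb{R}$-linear combination of $|a_j|^2$ times $\cos$ or $\sin$ evaluated at one of $2\pi j(\Theta'-\Theta)$ or $2\pi(j(\Theta'+\Theta)+2\alpha_j)$. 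Summing from $j=N+1$ to $N+M$, the hypothesis that $\Theta$ and $\Theta'$ are $\rho$-correlated then forces at least one of four exponential sums
\[
\Big|\sum_{j=N+1}^{N+M} c_j\, e(j\Psi)\Big| \gtrsim \rho,\qquad \Psi \in \{\Theta'-\Theta,\ \Theta'+\Theta\},
\]
to hold, where the coefficients satisfy $|c_j| \le |a_j|^2 \le 1/j$.

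For any fixed $\Theta$, both $\Theta'\mapsto \Theta'-\Theta$ and $\Theta'\mapsto \Theta'+\Theta$ preserve minimal separation modulo $1$, so as $\Theta'$ varies over the $\rho$-correlated elements of $\mc{S}$ each of the four phase families remains $\delta$-separated. Writing $R$ for the number of such $\Theta'$ and applying \cref{thm:large-siev} to each family separately, I obtain
\[
R\rho^2 \;\lesssim\; (M+\delta^{-1})\sum_{j=N+1}^{N+M} |c_j|^2 \;\lesssim\; (M+\delta^{-1})\cdot \frac{M}{N^2},
\]
where I used the bound $\sum_{j=N+1}^{N+M} j^{-2} \lesssim M/N^2$. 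Combining this with the trivial bound $R \le |\mc{S}|$ (which absorbs any $M$-contribution exceeding $|\mc{S}|$) then yields the claimed $R \lesssim \rho^{-2}(\delta^{-1}+|\mc{S}|)MN^{-2}$.

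The only point requiring any care is the bookkeeping of the cross-phase piece involving $\Theta'+\Theta$, which arises solely because $a_j$ is allowed to be complex: in the purely real case only the diagonal piece $\Theta'-\Theta$ survives and a single application of the large sieve would suffice. Fortunately $\Theta'\mapsto \Theta'+\Theta$ still preserves $\delta$-separation, so the cross piece sits inside the same large-sieve framework and is handled by a parallel application of \cref{thm:large-siev}. I anticipate no other serious obstacle.
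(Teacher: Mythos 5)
Your argument is correct and is essentially the paper's proof: expand each $u_jv_j$ into exponential sums at the phases $\pm\Theta\pm\Theta'$ with coefficients bounded by $|a_j|^2\le 1/j$, then apply the large sieve to each $\delta$-separated phase family---the paper writes $\on{Re}(w)=(w+\bar w)/2$ directly where you use polar form plus product-to-sum identities, but that is a cosmetic difference. One small inaccuracy in your closing aside: the cross-phase $\Theta'+\Theta$ is not an artifact of $a_j$ being complex, since $\cos A\cos B=\tfrac12\bigl(\cos(A-B)+\cos(A+B)\bigr)$ already produces a sum-phase term when $a_j$ is real; complexity of $a_j$ only introduces the $j$-dependent shift $2\alpha_j$, which (as you correctly do) gets absorbed into the coefficient $c_j$, so this slip does not affect the proof.
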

\begin{proof}
Consider $a_{N+1},\ldots,a_{N+M}$ which are complex numbers with $|a_j|\le j^{-1/2}$. Note that 
\begin{align*}
\Big|\sum_{j=N+1}^{N+M}&\on{Re}(a_j \cdot e(j\Theta)) \cdot \on{Re}(a_j \cdot e(j\Theta'))\Big|= \Big|\sum_{j=N+1}^{N+M}\frac{(a_j \cdot e(j\Theta) + \ol{a_j} \cdot e(-j\Theta))}{2} \cdot \frac{(a_j \cdot e(j\Theta') + \ol{a_j} \cdot e(-j\Theta'))}{2}\Big|\\
&\qquad\qquad\qquad\le \sup_{\substack{w_j \in \{a_j^2, |a_j|^2, \ol{a_j}^2\}\\\wt{\Theta}\in \{\pm \Theta \pm \Theta'\}}} \Big|\sum_{j=N+1}^{N+M}w_j \cdot e(j\cdot \wt{\Theta})\Big|.
\end{align*}
We may bound the sums of, e.g., $\on{Re}(a_j \cdot e(j\Theta)) \cdot \on{Im}(a_j \cdot e(j\Theta'))$, by the same quantity.  Note also that $|w_j|\le j^{-1}$ and therefore $\sum_{j=N+1}^{N+M}|w_j|^2\le M \cdot N^{-2}$. 

Fix $\Theta\in \mc{S}$. For an alternate $\Theta'\in \mc{S}$ to be $\rho$-correlated with $\Theta$, it must be that
\[\sup_{\substack{w_j \in \{a_j^2, |a_j|^2, \ol{a_j}^2\}\\\wt{\Theta}\in \{\pm \Theta \pm \Theta'\}}} \Big|\sum_{j=N+1}^{N+M}w_j \cdot e(j\cdot \wt{\Theta})\Big| \ge \rho.\]
By \cref{thm:large-siev}, there are at most 
\[\lesssim \rho^{-2} \cdot (\delta^{-1} + |\mc{S}|) \cdot MN^{-2}\]
such points. 
\end{proof}

We now prove our decorrelation estimate. Define $H:\mb{C}\to [0,1]$ via $H(z)$ to be a nonnegative $1$-bounded smooth bump function with $H(z) = 1$ for $|z| \le 1$ and $H(z) = 0$ for $|z|\ge 2$. We define 
\[G(w_1,\ldots,w_{\ell_k}) = H\Big(k^2 \cdot (w_1 + \cdots + w_{\ell_k})\Big) \cdot \prod_{j=1}^{\ell_k}H\Big(\delta_k^{-1/4} \cdot \sum_{r=1}^{j}w_{r}\Big).\]

\begin{lemma}\label{lem:decouple}
Let $z = e(\Theta)$ and $z' = e(\Theta')$. We define
\[\rho = \sup_{1\le j< \ell_k}\max_{\substack{u_j\in \{\on{Re}(a_j \cdot e(j\Theta)), \on{Im}(a_j \cdot e(j\Theta))\}\\v_j\in \{\on{Re}(a_j \cdot e(j\Theta')), \on{Im}(a_j \cdot e(j\Theta'))\}}}\Big|\sum_{n = r_{k,j}}^{r_{k,j+1}-1}u_jv_j\Big|.\]
Then if we write $\mbf{Q}_k(z) = (Q_{k,1}(z), \ldots,Q_{k,\ell_k}(z))$ we have 
\begin{align*}
\mb{E}[G(\mbf{Q}_k(z)) \cdot G(\mbf{Q}_k(z'))]
= \mb{E}[G( \mbf{Q}_k(z))] \cdot \mb{E}[G(\mbf{Q}_k(z')] + O(N_k^{-1/2} \cdot (\log N_k)^{O(1)} + \rho^{1/4} \cdot (\log N_k)^{O(1)}).
\end{align*}
\end{lemma}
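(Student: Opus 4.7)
The plan is to reduce to the Gaussian setting, decouple there interval by interval, and then return, producing the two claimed error terms separately.

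For \textbf{Step 1}, I extend the hybrid argument of \cref{lem:lindberg-pairs} to the function $\Phi(\vec u, \vec v) := G(\vec u)G(\vec v) \colon \mb{C}^{2\ell_k} \to \mb{R}$, swapping $\eps_n \mapsto g_n$ one index at a time; each $\eps_n$ affects exactly one coordinate of $\vec u$ and one of $\vec v$ (the coordinate indexed by the $j$ with $n \in [r_{k,j}, r_{k,j+1})$), so Taylor-expanding in the appropriate rank-one direction gives a per-step error $\lesssim |a_n|^3 \cdot \max_{|\alpha|=3}\|\partial^\alpha \Phi\|_\infty$. Since $\Phi$ is a product of $2(\ell_k + 1)$ smooth $1$-bounded bump factors, each with derivatives of order at most $3$ bounded by $M \lesssim k^6$ (from the $k^2$-scaled leading bump $H(k^2\sum_r w_r)$; the $\delta_k^{-1/4}$-scaled partial-sum bumps contribute less), \cref{lem:lip} yields $\max_{|\alpha|=3}\|\partial^\alpha \Phi\|_\infty \lesssim \ell_k^3 k^{18} = (\log N_k)^{O(1)}$. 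Combined with $\sum_{n \in [N_k, N_{k+1})} |a_n|^3 \leq \sum_n n^{-3/2} \lesssim N_k^{-1/2}$, Step 1 contributes a total error of $N_k^{-1/2}(\log N_k)^{O(1)}$ to pass to $\tilde{\mbf{Q}}_k$.

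For \textbf{Step 2}, observe that in the Gaussian model the pairs $(\tilde Q_{k,j}(z), \tilde Q_{k,j}(z'))$ are independent across $j$, since distinct intervals $I_j = [r_{k,j}, r_{k,j+1})$ use disjoint Gaussians; only within each fixed $j$ is there coupling between $z$ and $z'$. I run a second hybrid that replaces $\tilde Q_{k,j}(z')$ by $\vec \beta_j \cdot \vec g_j'$ with $\vec g_j'$ an independent copy, for $j = 1, \ldots, \ell_k$. Conditioning on all frozen coordinates, each such step is an instance of \cref{lem:coupling} with $\vec \alpha_j = (a_n e(n\Theta))_{n \in I_j}$ and $\vec \beta_j = (a_n e(n\Theta'))_{n \in I_j}$, whose common squared norm $\sum_{n \in I_j} |a_n|^2 \lesssim (\log N_k)^{-10}$ is safely at most $1$ and whose real/imaginary inner products are $\leq \rho$ by the defining supremum. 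The Lipschitz constant of $\Phi$ in the two active coordinates is $\lesssim |\partial_{u_j} G| + |\partial_{v_j} G| \lesssim k^2 + \ell_k \delta_k^{-1/4} = (\log N_k)^{O(1)}$, since only $O(\ell_k)$ of the bump factors depend on $u_j$. Hence each step contributes $\rho^{1/4}(\log N_k)^{O(1)}$, and summing over the $\ell_k \leq (\log N_k)^{O(1)}$ intervals gives total decoupling error $\rho^{1/4}(\log N_k)^{O(1)}$. After this hybrid $\tilde{\mbf{Q}}_k(z)$ and $\tilde{\mbf{Q}}_k(z')$ are independent and the expectation factors.

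Finally \textbf{Step 3} applies the Step 1 exchange in reverse to each of $\mb{E}[G(\tilde{\mbf{Q}}_k(z))]$ and $\mb{E}[G(\tilde{\mbf{Q}}_k(z'))]$ separately, adding another $O(N_k^{-1/2}(\log N_k)^{O(1)})$ error and yielding the claim. The main technical obstacle is bookkeeping the derivative blow-up in $\Phi$: the endpoint bump $H(k^2 \sum w_r)$ has third derivatives of size $k^6$, which combined with the $\ell_k^3$ factor from \cref{lem:lip} and the Lipschitz cost of Step 2 could in principle produce powers of $N_k$; one must verify that $k \lesssim \log N_k$ and $\ell_k \leq (\log N_k)^{O(1)}$ absorb everything into the tolerated polylog exponent. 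A secondary subtlety is the norm hypothesis of \cref{lem:coupling}, which is guaranteed by the sparsity $r_{k,j+1}/r_{k,j} = 1 + O((\log N_k)^{-10})$ of the sub-scales.
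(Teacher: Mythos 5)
Your proof is correct and follows essentially the same approach as the paper's: a Lindeberg exchange to pass to the Gaussian model, an iterative application of Lemma~\ref{lem:coupling} across the intervals $I_j$ to decouple $z$ from $z'$, and a reverse Lindeberg step to express the product of expectations back in terms of $\mbf{Q}_k$. Your write-up makes the bookkeeping (the $\Phi:\mb{C}^{2\ell_k}\to\mb{R}$ extension of the Lindeberg step, the conditional application of Lemma~\ref{lem:coupling}, and the Lipschitz/third-derivative bounds) more explicit than the paper does, but the underlying argument is the same.
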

\begin{proof}
Recall the definition of $\wt{Q}$ from \eqref{eq:wtq-def} and define $\wt{\mbf{Q}}_k$ analogously to $\mbf{Q}_k$.  
\cref{lem:lindberg-pairs} shows 
\begin{align*}
|\mb{E}[G(\mbf{Q}_k(z)) \cdot G(\mbf{Q}_k(z'))] - \mb{E}[G(\wt{\mbf{Q}}_k(z)) \cdot G(\wt{\mbf{Q}}_k(z'))]| \leq  N_k^{-1/2} \cdot (\log N_k)^{O(1)}
\end{align*}
where we used the bounds $\ell_k \leq (\log N_k)^{O(1)}$, $\delta_k^{-1/4} \leq \log k$ and $k^2 \leq (\log N_k)^{O(1)}$ and \cref{lem:lip} to control the third derivative.
We may analogously compare $\mb{E}[G(\mbf{Q}_k(z))]$ to $\mb{E}[G(\wt{\mbf{Q}}_k(z))]$.  Therefore it is sufficient to consider $\wt{\mbf{Q}}_{k}$ instead of $\mbf{Q}_k$. 

We now define 
\[{Q}_{k,j}^\circ(z) =  \sum_{n = r_{k,j}}^{r_{k,j+1}-1}\wt{g}_na_nz^{n};\]
here $\wt{g}_n$ are just an independent set of normal standard Gaussian variables. Define $\mbf{Q}^\circ_k$ analogously. Applying \cref{lem:coupling} iteratively we have that 
\begin{align*}
\Big|\mb{E}[G(\wt{\mbf{Q}}_k(z)) \cdot G(\wt{\mbf{Q}}_k(z'))] - \mb{E}[G(\wt{\mbf{Q}}_k(z)) \cdot G(\mbf{Q}^\circ_k(z'))] \Big|\le (\log N_k)^{O(1)} \cdot \rho^{1/4}. 
\end{align*}

However as $G(\wt{\mbf{Q}}_i(z))$ and $G(\mbf{Q}^\circ_i(z'))$ are independent we have the desired result.
\end{proof}

\subsection{Completing the proof}\label{subsec:comp}
We are now ready to complete the proof of \cref{thm:main}.
\begin{proof}[{Proof of \cref{thm:main}}]
Fix $\eps > 0$ to be sufficiently small.  We consider $k$ minimal such that $\delta_i\le \eps$ and $i\ge \eps^{-1}$ and set for all $n\le N_i$ that $a_n = 0$. As this only changes a bounded number of coefficients, this does not affect the convergence of any particular point.  By \cref{lem:sparsify-to-r}  and \cref{lem:deriv-controlled}, we may assume $\mc{E}_1^c \cap \mc{E}_2^c$ holds.  By \cref{lem:convergence-criterion}, we simply need to see that $\mc{A}$ is a nonempty set.

We first claim that 
\begin{equation} \label{eq:A_k-nested}
    \mc{A}_k + [-N_{k}^{-1} \cdot (\log N_k)^{-5},N_{k}^{-1} \cdot (\log N_k)^{-5}] \supseteq \mc{A}_{k+1} +[-N_{k+1}^{-1} \cdot (\log N_{k+1})^{-5},N_{k+1}^{-1} \cdot (\log N_{k+1})^{-5}]\,.
\end{equation}
To prove this, note first that $\mc{A}_{k+1}\subseteq \mc{A}_{k} + [-N_{k}^{-1} \cdot (\log N_{k})^{-8},N_{k}^{-1} \cdot (\log N_{k})^{-8}]$ by construction. Therefore it is sufficient to note 
\[1 \ge (\log N_{k})^{-3} + \frac{N_k\cdot (\log N_{k})^{-5}}{N_{k+1} \cdot (\log N_{k+1})^{-5}}\]
which proves \eqref{eq:A_k-nested}.  This shows the sets $\mc{A}_k + [-N_{k}^{-1} \cdot (\log N_k)^{-5},N_{k}^{-1} \cdot (\log N_k)^{-5}]$ are a nested sequence of compact sets.  Thus, to prove that $\mc{A} \neq \emptyset$ it is sufficient to prove that $\mc{A}_{k}\neq \emptyset$ for all $k$, by the finite intersection property.

We now recall that since we set $a_n =0$ for all $n \leq N_i$ that we have $|\mc{A}_i| = N_i$.  We proceed by induction and prove that $|\mc{A}_\ell|\ge N_{\ell}^{1-10^{-4}}$ for all $\ell\ge i$. The key claim will be that 
\begin{equation}\label{eq:second-moment-application}
\mb{P}[|\mc{A}_{\ell+1}|\ge N_{\ell+1}^{1-10^{-4}}| |\mc{A}_{\ell}|\ge N_{\ell}^{1-10^{-4}}] \ge 1 - N_{\ell}^{-1/20}\,.\end{equation}
Applying the union bound then gives \[ \mb{P}(\mc{A} \neq \emptyset) \geq 1 - O(N_i^{-1/20}) \geq 1 - \eps\]
which would complete the proof. 

Seeking to prove \eqref{eq:second-moment-application}, define \[\mc{A}_{s+1}^{\ast} = \Big\{\theta \in \mc{S}_{k+1}:  \exists~\varphi \in \mc{A}_{s}\text{ with }\Big|\varphi- \theta\Big|\le N_{s}^{-1} \cdot (\log N_{s})^{-8}.\Big\}\] and recall by the definition of $\mc{A}_{s+1}$ at \eqref{eq:A-k-def} that $\mc{A}_{s+1} = \mc{A}_{s+1}^\ast \cap \mc{G}_{s+1}$. Observe that 
\[|\mc{A}_{s + 1}^{\ast}| \asymp  |\mc{A}_{s}| \cdot M_{s + 1} \cdot (\log N_s)^{-8}.\]
We define 
\[X = \sum_{z\in \mc{A}_{s + 1}^{\ast}}G((Q_{s,1}(z),\ldots,Q_{s,\ell_{s}}(z)).\]
Note that $G((Q_{s,1}(z),\ldots,Q_{s,\ell_{s}}(z))$ is nonnegative, $1$-bounded with $\mbm{1}[\theta \in \mc{G}_{s+1}] \geq G((Q_{s,1}(z),\ldots,Q_{s,\ell_{s}}(z))$.  In particular, this implies $\mc{A}_{s+1} \geq X$.  Furthermore note by \cref{lem:one-point}, we have that 
\begin{equation}\label{eq:good-pt-LB-use}
\mb{E}[X|\mc{A}_s]\ge e^{-O((\log\log N_{s})^2 \cdot \delta_{s})} \cdot |\mc{A}_{s + 1}^{\ast}| \ge |\mc{A}_s| \cdot M_{s+1}^{1-10^{-5}}.\end{equation}

We now will upper bound the second moment.  Set $\rho = |\mc{A}_s|^{-1/3}$.  Note that $\mc{A}_{s+1}^\ast$ is a $\delta = N_{s+1}^{-1}$ separated set.  By \cref{lem:corr-point}, the number of pairs of $\rho$-correlated points is at most \[ \lesssim |\mc{A}_{s+1}^\ast| \cdot \ell_s \cdot \rho^{-2} (\delta^{-1} + |\mc{A}_{s+1}^\ast|) \cdot N_s^{-1} \lesssim |\mc{A}_s|^{2/3}\cdot \ell_s\cdot M_s\cdot |\mc{A}_{s+1}^\ast|\lesssim |\mc{A}_s|^{2/3}\cdot M_s^2\cdot |\mc{A}_{s+1}^\ast|\,. \]
Here we use that as $M\le N$ in \cref{lem:corr-point}, we may note that $MN^{-2}\le N^{-1}\le N_s^{-1}$ and that $|\mc{A}_{s+1}^\ast| \le \delta^{-1}\le N_{s+1}$.

By \cref{lem:decouple}, we then may upper bound 

\begin{align*}
    \mb{E}[X^2|\mc{A}_s] &\le (\mb{E}[X|\mc{A}_s])^2 + O((\log N_s)^{O(1)} \cdot (|\mc{A}_s|^{-1/12} + N_{s}^{-1/2})) \cdot |\mc{A}_{s + 1}^{\ast}|^2 + |\mc{A}_s|^{2/3} M_k^3 |\mc{A}_{s+1}^\ast| \\
    &\leq (\mb{E}[X|\mc{A}_s])^2 + |\mc{A}_{s+1}^\ast|^{2-{1/14}}
\end{align*}
where in the second inequality we assumed $s$ was large enough.  If we take $\lambda = M_{s+1}^{10^{-5}}$ we have \begin{align*}
    \mb{P}\left( |X - \mb{E}[X\,|\,\mc{A}_s]| \geq \lambda  \mb{E}[X\,|\,\mc{A}_s] \right) \leq \lambda^{-2} \frac{|\mc{A}_{s+1}^\ast|^{2 - 1/14}}{\mb{E}[X\,|\,\mc{A}_s]^2} \leq \frac{M_{s+1}^{O(1)}}{|A_s|^{1/14}}\,.
\end{align*}
This confirms \eqref{eq:second-moment-application}, completing the proof.
\end{proof}

\section{Proof of \texorpdfstring{\cref{thm:dim}}{Theorem 1.2}}\label{sec:Hausdorff}
The proof of \cref{thm:dim} relies on exactly the probabilistic tools to prove \cref{thm:main}.  Already the proof of \cref{thm:main} implicitly gives many convergent points.  To upgrade \cref{thm:main} to \cref{thm:dim}, we convert the point set $\mc{A}_s$ into an appropriate \emph{Frostman measure}, a standard tool for estimating Hausdorff dimension.  We will use the ``easy'' direction of Frostman's lemma.
\begin{lemma}\label{lem:Frostman}
Fix $\tau \in [0,1]$. Suppose $\mc{S}\subseteq [0,1]$ is a Borel set and that there is a Borel measure $\mu$ with $\mu(\mc{S}) > 0$ such that there is $C\ge 1$ such that for any $[a,b]\subseteq [0,1]$ then 
\[\mu([a,b])\le C \cdot |b-a|^{\tau}.\]
The $\mc{S}$ has Hausdorff dimension at least $\tau$.
\end{lemma}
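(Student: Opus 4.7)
The plan is to directly lower-bound the $\tau$-dimensional Hausdorff measure $\mc{H}^\tau(\mc{S})$ using the hypothesized measure $\mu$. Recall that for $\delta > 0$ one sets
\[
\mc{H}^\tau_\delta(\mc{S}) = \inf\left\{ \sum_i \on{diam}(U_i)^\tau : \mc{S} \subseteq \bigcup_i U_i,\ \on{diam}(U_i) \leq \delta\right\},
\]
and $\mc{H}^\tau(\mc{S}) = \lim_{\delta \to 0^+} \mc{H}^\tau_\delta(\mc{S})$. Since the Hausdorff dimension of $\mc{S}$ is the infimum of $\tau$ for which $\mc{H}^\tau(\mc{S}) = 0$, it suffices to prove $\mc{H}^\tau(\mc{S}) > 0$.

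The main observation is that the hypothesis on $\mu$ forces any ``low-dimensional'' cover of $\mc{S}$ to carry large total weight. Concretely, any set $U \subseteq [0,1]$ with $\on{diam}(U) = d$ is contained in the closed interval $[\inf U,\sup U]$ of length exactly $d$, so the hypothesis gives $\mu(U) \leq C \cdot d^\tau = C \cdot \on{diam}(U)^\tau$.

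Given this, the proof is immediate by countable subadditivity: for any countable cover $\{U_i\}$ of $\mc{S}$ with $\on{diam}(U_i) \leq \delta$,
\[
0 < \mu(\mc{S}) \leq \sum_i \mu(U_i) \leq C \sum_i \on{diam}(U_i)^\tau.
\]
Taking the infimum over all such covers yields $\mc{H}^\tau_\delta(\mc{S}) \geq \mu(\mc{S})/C$ uniformly in $\delta$, and hence $\mc{H}^\tau(\mc{S}) \geq \mu(\mc{S})/C > 0$, completing the proof.

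There is no substantive obstacle; this is the elementary ``mass distribution principle'' direction of Frostman's lemma (the hard direction, which we do not need, produces such a $\mu$ from positive Hausdorff content). The only minor technicality is ensuring each covering set $U_i$ can be fed into $\mu$, which is handled by replacing $U_i$ with its enclosing closed interval at no cost to the diameter.
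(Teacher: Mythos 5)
Your proof is correct, and it is the standard ``mass distribution principle'' argument that this lemma is universally attributed to; the paper states \cref{lem:Frostman} without proof precisely because this is the classical easy direction of Frostman's lemma. Your handling of the only real technicality (replacing an arbitrary covering set $U_i$ by the closed interval $[\inf U_i, \sup U_i]$, which is Borel and has the same diameter, so that $\mu$ applies and the hypothesis kicks in) is exactly right, and the uniform-in-$\delta$ lower bound $\mc{H}^\tau_\delta(\mc{S}) \geq \mu(\mc{S})/C$ correctly yields $\mc{H}^\tau(\mc{S}) > 0$ and hence $\dim_H \mc{S} \geq \tau$.
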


We now set up the proof of \cref{thm:dim}. We will construct the ``Frostman'' measure via making the branching process structure of convergent points more explicit and defining the measure by pushing the measure down the generations of this branching process. 

\subsection*{Setting up the recursive structure}

Recall that  $\mc{S}_k = \Big\{\frac{j}{N_k}: j\in [N_k]\Big\}.$ 
Given $\theta \in \mc{S}_k$, we define the \emph{children} of $\theta$ by 
\[\on{Ch}(\theta) = \left\{\varphi \in \mc{S}_{k+1} : \Big|\theta - \varphi\Big|\le N_k^{-1} \cdot (\log N_k)^{-8}\right\}.\]
 For $\varphi \in \mc{S}_{k+1}$ we define the \emph{parent} of $\varphi$ to be the unique $\theta \in \mc{S}_k$ so that $\varphi \in \on{Ch}(\theta)$; we write $\on{Par}(\varphi) = \theta$ for the parent of a point. 
Note that a parent $\theta \in \mc{S}_k$ has $|\on{Ch}(\theta)| \asymp  M_{k+1} \cdot (\log N_{k})^{-8}$.
 It will also be useful to discuss the \emph{grand-children} of a node $\theta \in \mc{S}_k$ defined as \[\on{Gch}(\theta) = \bigcup_{\varphi \in \on{Ch}(\theta)}\on{Ch}(\varphi) \subset \mc{S}_{k+2}\,.\] 

Rather than working directly with $\mc{A}_s$ as in the proof of \cref{thm:main}, we will need a more robust notion.  We will define our ``healthy'' points $(\mc{H}_j)_{j \geq 1}$ by initializing $\mc{H}_1 = \mc{S}_1$ and inductively continuing as follows: \begin{equation}
    \mc{H}_{\ell} = \{ \theta \in \mc{S}_\ell \cap \mc{G}_\ell : \on{Par}(\theta) \in \mc{H}_{\ell - 1} \wedge |\{ \varphi \in \on{Ch}(\theta) \cap \mc{G}_{\ell+1} : |\on{Ch}(\varphi) \cap \mc{G}_{\ell+2}| \geq M_{\ell+2}^{1- \tau/2} \}| \geq M_{\ell+1}^{1-\tau}\} \,.
\end{equation}
The first condition of $\on{Par}(\theta) \in \mc{H}_{\ell - 1}$ will ensure that healthy points have a tree-like structure, which will allow us to define the measure by pushing it down to the next layer of healthy nodes.  The second condition can be interpreted as saying that not only are a large number of children of $\theta$ good, but a large number of children of $\theta$ also have a large number of \emph{their} children being good.

In analogy to $\mc{A}$ from \cref{lem:convergence-criterion}, we define \begin{equation} \label{eq:H-subset-A}
    \mc{H} = \bigcap_{k \geq 1} \left(\mc{H}_k + [-N_k^{-1}(\log N_k)^{-5}, N_k^{-1}(\log N_k)^{-5}] \right) \subset \mc{A}\,.
    \end{equation}

\subsection*{An inductive step: ensuring health points have healthy children}

We will use the same probabilistic toolkit used to prove \cref{thm:main} in order to show that healthy nodes beget healthy children.  

\begin{lemma}\label{lem:propagation-healthy}
    Let $\theta \in \mc{S}_\ell$ and suppose $U \subset \on{Ch}(\theta)$ with $|U| \geq M_{\ell+1}^{1 - \tau/2}$.  Then \begin{equation*}
        \mb{P}\left( \left|\{\varphi \in U : |\on{Ch}(\varphi) \cap \mc{G}_{\ell+2}| \geq M_{\ell+2}^{1-\tau/2} \} \right| \geq M_{\ell+1}^{1 - \tau} \right) \geq 1 - M_{\ell+2}^{-\Omega(\tau)} \,.
    \end{equation*}
\end{lemma}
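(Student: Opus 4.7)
The plan is to mimic the second-moment/large-sieve scheme from the inductive step of \cref{thm:main}, applied to the total smoothed count of good grand-children of $\theta$ coming from $U$, and to finish with a pigeonhole converting the resulting mass bound into the desired count of $\varphi \in U$ with many good children.

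Set $V = \bigsqcup_{\varphi \in U}\on{Ch}(\varphi) \subset \mc{S}_{\ell+2}$, which is $N_{\ell+2}^{-1}$-separated with $|V| \asymp |U|\,M_{\ell+2}\,(\log N_{\ell+1})^{-8}$. For each $\psi \in V$ let $G_\psi = G(Q_{\ell+1,1}(e(\psi)), \ldots, Q_{\ell+1, \ell_{\ell+1}}(e(\psi)))$ with $G$ the bump function introduced just before \cref{lem:decouple}; by construction $0 \leq G_\psi \leq \mbm{1}[\psi \in \mc{G}_{\ell+2}]$. Put $Y_\varphi = \sum_{\psi \in \on{Ch}(\varphi)} G_\psi$ and $Y = \sum_{\varphi \in U} Y_\varphi$. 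By \cref{lem:one-point}, $\mb{E}[G_\psi] \geq \exp(-O(\delta_{\ell+1}(\log\log N_{\ell+1})^2))$; since $\log M_{\ell+2} \asymp (\log\log N_{\ell+1})^2$ and $\delta_{\ell+1} \to 0$, for $\ell$ large enough this exceeds $M_{\ell+2}^{-\tau/10}$, giving $\mb{E}[Y] \geq |V|\,M_{\ell+2}^{-\tau/10}$.

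For the variance take $\rho = |U|^{-1/3}$. Summing \cref{lem:corr-point} over the $\ell_{\ell+1}$ sub-intervals $[r_{\ell+1,j}, r_{\ell+1,j+1})$, the number of $\rho$-correlated pairs in $V$ is at most $|V|\,\ell_{\ell+1}\,\rho^{-2}(N_{\ell+2}+|V|)N_{\ell+1}^{-1} \lesssim |V|\,|U|^{2/3}\,M_{\ell+2}\,(\log N_{\ell+1})^{O(1)}$, and each contributes at most $1$ to $\mb{E}[Y^2]$. For uncorrelated pairs \cref{lem:decouple} yields $\mb{E}[G_{\psi_1}G_{\psi_2}] \leq \mb{E}[G_{\psi_1}]\mb{E}[G_{\psi_2}] + O(\rho^{1/4}(\log N_{\ell+1})^{O(1)})$. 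Adding these contributions,
\[ \on{Var}(Y) \lesssim |V|^2\,|U|^{-1/12}\,(\log N_{\ell+1})^{O(1)} + |V|\,|U|^{2/3}\,M_{\ell+2}\,(\log N_{\ell+1})^{O(1)}. \]
Dividing by $\mb{E}[Y]^2 \geq |V|^2 M_{\ell+2}^{-\tau/5}$ and using $|U| \geq M_{\ell+1}^{1-\tau/2}$ together with $\log M_{\ell+1} \sim \log M_{\ell+2}$ (a consequence of \cref{eq:M-N-definition}), this ratio is at most $M_{\ell+2}^{-\Omega(\tau)}$. Chebyshev then gives $\mb{P}(Y \geq \mb{E}[Y]/2) \geq 1 - M_{\ell+2}^{-\Omega(\tau)}$.

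On the event $Y \geq \mb{E}[Y]/2$, set $S = \{\varphi \in U : Y_\varphi \geq M_{\ell+2}^{1-\tau/2}\}$. Bounding $Y_\varphi \leq |\on{Ch}(\varphi)| \lesssim M_{\ell+2}(\log N_{\ell+1})^{-8}$ on $S$ and $Y_\varphi < M_{\ell+2}^{1-\tau/2}$ off $S$,
\[ \tfrac{1}{2}\,|V|\,M_{\ell+2}^{-\tau/10} \leq Y \leq |S|\,M_{\ell+2}\,(\log N_{\ell+1})^{-8} + |U|\,M_{\ell+2}^{1-\tau/2}. \]
For $\tau$ small and $\ell$ large the first right-hand term dominates the second, so $|S| \gtrsim |U|\,M_{\ell+2}^{-\tau/10} \geq M_{\ell+1}^{1-\tau/2}\,M_{\ell+2}^{-\tau/10} \geq M_{\ell+1}^{1-\tau}$, where the last inequality again uses $\log M_{\ell+1} \sim \log M_{\ell+2}$. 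Since $G_\psi > 0$ forces $\psi \in \mc{G}_{\ell+2}$, every $\varphi \in S$ satisfies $|\on{Ch}(\varphi) \cap \mc{G}_{\ell+2}| \geq Y_\varphi \geq M_{\ell+2}^{1-\tau/2}$, proving the lemma. The main obstacle is the parameter bookkeeping: getting the $|U|^{-c}$, $M_{\ell+2}^{\tau/5}$, and $(\log N_{\ell+1})^{O(1)}$ factors to combine favorably in the variance-to-mean ratio, and ensuring the gain $M_{\ell+2}^{-\tau/10}$ from the one-point estimate beats the $(\log N_{\ell+1})^8$ child-count deficit in the pigeonhole; both are handled by the slowly-growing choice of scales in \cref{eq:M-N-definition}, which makes $(\log N_{\ell+1})^{O(1)}$ sub-polynomial in $M_{\ell+2}$ and $\log M_{\ell+2}$ comparable to $\log M_{\ell+1}$.
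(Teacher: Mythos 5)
Your proof is correct and takes essentially the same route as the paper: you form the same smoothed count $Y=\sum_{\psi\in V}G_\psi$, get the mean from \cref{lem:one-point}, bound the variance via \cref{lem:corr-point} and \cref{lem:decouple} (with $\rho=|U|^{-1/3}$ in place of the paper's $\rho=M_{\ell+2}^{-1/5}$, an equivalent choice), and conclude by Chebyshev — the only addition is that you spell out the final pigeonhole step from ``$Y\ge \mb{E}[Y]/2$'' to ``many $\varphi\in U$ have $\gtrsim M_{\ell+2}^{1-\tau/2}$ good children,'' which the paper asserts without detail. You have also, in passing, corrected the indexing: $G_\psi$ should indeed be evaluated on $\mbf{Q}_{\ell+1}(\cdot)$ (the coefficients governing membership in $\mc{G}_{\ell+2}$), not $\mbf{Q}_{\ell+2}$ as the paper's displayed formula has it.
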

\begin{proof}
    Set $V = \bigcup_{\varphi \in U} \on{Ch}(\varphi)$ and note that $|V| \asymp |U| \cdot M_{\ell + 2} (\log N_{\ell + 2})^{-8}$.  Mimicking the proof of \cref{thm:main}, set \[X = \sum_{z \in V} G(\mbf{Q}_{\ell+2}(z)) \] and note that $|V| \cap \mc{G}_{\ell+2} \geq X$.  For $\rho = M_{\ell+2}^{-1/5}$, we may bound the number of $\rho$-correlated pairs in $|V|$ using \cref{lem:corr-point} by \begin{equation*}
        \lesssim |V| \cdot \rho^{-2} \cdot N_{\ell+2} \cdot N_{\ell+1}^{-1} \leq |U| \cdot M_{\ell+2}^{8/5}\,.
    \end{equation*}
    Applying \cref{lem:decouple} we may then bound \begin{equation*}
        \mb{E}\left[ (X - \mb{E}[X])^2 \big| U\right] \leq |V|^2 M_{\ell+2}^{-1/{20}} + |U| \cdot M_{\ell+2}^{8/5}\,.
    \end{equation*}
    By Chebyshev's inequality, this implies \[\mb{P}\left(X \geq |V| M_{\ell+2}^{-\tau/10}\right) \geq 1 - M_{\ell+2}^{-\Omega(\tau)}\,.\]
    On this event, we have that at least $|U| M_{\ell+2}^{-\tau/10} \cdot (\log N_\ell)^{-O(1)} \geq M_{\ell+1}^{1-\tau}$ many elements of $U$ have the desired number of good children.
\end{proof}

\subsection*{Defining the Frostman measure} 
We will define a probability measure $\nu$ iteratively on $\mb{R} / \mb{Z} \cup \{*\}$ where $\{*\}$ is an isolated point that we treat as a sink state.  We define $\nu_1$ to be uniform on $\mc{H}_1 + [-N_1^{-1} (\log N_1)^{-5},N_1^{-1} (\log N_1)^{-5}].$  We will always maintain that \[\on{supp}(\nu_\ell) \subset \mc{H}_\ell + [-N_\ell^{-1} (\log N_\ell)^{-5},N_\ell^{-1} (\log N_\ell)^{-5}\,.\]  We then define $\nu_{\ell+1}$ on $\mc{H}_{\ell+1} + [-N_{\ell+1}^{-1} (\log N_{\ell+1})^{-5},N_{\ell+1}^{-1} (\log N_{\ell+1})^{-5}]$ as follows.  For $\theta \in \mc{H}_\ell$, set \[ U = \{\varphi \in \on{Ch}(\theta) : |\on{Ch}(\varphi) \cap \mc{G}_{\ell+2}| \geq M_{\ell+2}^{1 - \tau/2}  \} \] 
and recall that $|U| \geq M_{\ell+1}^{1 - \tau}$ since $\theta \in \mc{H}_\ell$.  We then split the mass of $\theta +[-N_\ell^{-1} (\log N_\ell)^{-5},N_\ell^{-1} (\log N_\ell)^{-5}]$ into $|U|$ many parts. For $\varphi \in U \cap \mc{H}_{\ell+1}$, assign the portion of this mass uniformly to the interval $\varphi+[-N_{\ell+1}^{-1} (\log N_{\ell+1})^{-5},N_{\ell+1}^{-1} (\log N_{\ell+1})^{-5}]$. For $\varphi \in U \cap \mc{H}_{\ell + 1}^c$, assign this mass to $\{*\}$.  Note that by \eqref{eq:A_k-nested}, each interval is assigned mass by at most one previous interval.  We define $\nu$ to be the limit of $\nu_{\ell}$.

We are now ready to put the pieces together to complete the proof. 

\begin{proof}[Proof of \cref{thm:dim}]
    Fix $\tau, \eps > 0$. Let $i$ be such that $\delta_i$ is sufficiently small with respect to $\tau$ and $\eps$.
 
    As in the proof of \cref{thm:main}, we may assume $\mc{E}_1^c \cap \mc{E}_2^c$ holds by \cref{lem:sparsify-to-r} and \cref{lem:deriv-controlled}.  We may again adjust $a_n = 0$ for all $n \leq N_{i+2}$ without affecting convergence; this implies that $\mc{H}_i = \mc{S}_i$.  We define the measure $\nu$ as above and note that $\nu$ is supported on $\mc{H} \cup \{*\}$.  By \cref{lem:convergence-criterion} and \eqref{eq:H-subset-A}, all points in $\mc{H}$ are convergent points.  To complete the proof, we need only show that $\nu$ assigns positive mass to $\mc{H}$ and that $\mc{H}$ is a $1 - O(\tau)$ Frostman measure.  

    For the former, note that by \cref{lem:propagation-healthy} and Markov's inequality, the expected fraction of mass set to $\{*\}$ at level $\ell$ is $M_{\ell}^{-\Omega(\tau)}$ for $\ell \geq i$ with probability $1 - M_{\ell}^{-\Omega(\tau)}$.  Since $\sum_{j > i} M_j^{-\Omega(\tau)} < \eps$ (for $i$ large enough), we have that with probability at least $1 - \eps$ the mass assigned to $\{*\}$ is at most, say, $1/2$. Since $\nu$ is a probability measure, this shows $\nu(\mc{H}) \geq 1/2.$  
    
    We now need to show the Frostman property.  First note that for each interval of the form $[i/N_{\ell}, (i+1)/N_{\ell}]$ we have \begin{equation}\label{eq:nu-small-interval}
    \nu\left([i/N_{\ell}, (i+1)/N_{\ell}] \right) \leq \nu_{\ell}([i/N_{\ell}, (i+1)/N_{\ell}]) \leq M_{\ell}^{-1 + \tau} \cdots M_{1}^{-1 + \tau} \lesssim N_{\ell}^{-1 + \tau}
    \end{equation}
    where the second inequality is since at level $k$ we split into at least $M_{k}^{-1 + \tau}$ many intervals and $[i/N_{\ell}, (i+1)/N_{\ell}]$ intersects at most $1$ interval from $\mc{H}_\ell +[-N_\ell^{-1} (\log N_\ell)^{-5},N_\ell^{-1} (\log N_\ell)^{-5}]$.  For an interval $I \subset \mb{R}/\mb{Z}$, choose $\ell$ so that $N_{\ell}^{-1} \leq |I| \leq N_{\ell-1}^{-1}.$  Then by losing at most a factor of $3$, we may assume that $I$ is of the form $[\frac{i}{N_\ell}, \frac{j}{N_{\ell}}]$.  Further, since $|I| \leq N_{\ell - 1}^{-1}$ we may also assume $1 \leq j - i \leq 3\cdot M_{\ell}\,.$  We may then bound \begin{align*}
        \nu(I) \lesssim (j-i) N_{\ell}^{-1 + \tau} \lesssim M_\ell N_{\ell}^{-1 + \tau} \leq \left(\frac{M_\ell}{N_{\ell}}\right)^{1 - 2\tau} \lesssim |I|^{1 - 2\tau}\,.
    \end{align*}
    This shows that with probability at least $1 - \eps$, the set $\mc{H}$ supports a $1 - 2\tau$ Frostman measure.  Since $\eps$ is arbitrary, this shows that for each fixed $\tau$, $\mc{H}$ almost-surely has Hausdorff dimension at least $1 - 2\tau$ by \cref{lem:Frostman}.  Applying this statement for a countable sequence of $\tau$ tending to zero and recalling that almost-surely $P$ converges on $\mc{H}$ by \cref{eq:H-subset-A} and \cref{lem:convergence-criterion} completes the proof. 
\end{proof}

\bibliographystyle{amsplain0}
\bibliography{main.bib}
\end{document}